\numberwithin{equation}{section}
\theoremstyle{definition}
\newtheorem{Definition}{Definition}[section]
\newtheorem{Remark}[Definition]{Remark}
\theoremstyle{plain}
\newtheorem{Theorem}[Definition]{Theorem}
\newtheorem{Proposition}[Definition]{Proposition}
\newtheorem{Corollary}[Definition]{Corollary}
\newtheorem{Lemma}[Definition]{Lemma}
\newcommand{\ga}{\gamma}
\newcommand{\Ga}{\Gamma}
\newcommand{\ep}{\varepsilon}
\newcommand{\la}{\lambda}
\newcommand{\La}{\Lambda}
\renewcommand{\varphi}{\phi}
\newcommand{\om}{\omega}
\newcommand{\N}{\mathbb{N}}
\newcommand{\Z}{\mathbb{Z}}
\newcommand{\C}{\mathbb{C}}
\newcommand{\Fgl}{\mathfrak{gl}}
\newcommand{\Fsl}{\mathfrak{sl}}
\newcommand{\Fso}{\mathfrak{so}}
\newcommand{\Fm}{\mathfrak{m}}
\newcommand{\CL}{\mathcal{L}}
\newcommand{\CM}{\mathcal{M}}
\newcommand{\op}{\operatorname}
\DeclareMathOperator{\ann}{ann}
\DeclareMathOperator{\End}{End}
\DeclareMathOperator{\Frac}{Frac}
\DeclareMathOperator{\Hom}{Hom}
\DeclareMathOperator{\Span}{Span}
\DeclareMathOperator{\Specm}{Specm}
\DeclareMathOperator{\SL}{SL}
\renewcommand{\hat}{\widehat}
\renewcommand{\tilde}{\widetilde}
\title[Galois order realization of noncommutative type $D$ Kleinian singularities]{Galois order realization of noncommutative type $D$ Kleinian singularities}
\author{Jonas T. Hartwig}
\date{\today}
\keywords{Galois order, Kleinian singularity}
\address{Department of Mathematics, Iowa State University, Ames, IA-50011, USA}
\email{jth@iastate.edu}
\urladdr{http://jthartwig.net}
\begin{document}
\begin{abstract}
Galois orders, introduced by Futorny and Ovsienko, is a class of noncommutative algebras that includes generalized Weyl algebras, the enveloping algebra of the general linear Lie algebra and many others.
We prove that the noncommutative Kleinian singularities of type $D$ can be realized as principal Galois orders. Our starting point is an embedding theorem due to Boddington.
We also compute explicit generators for the corresponding (Morita equivalent) flag order, as a subalgebra of the nil-Hecke algebra of type $A_1^{(1)}$. 
Lastly, we compute structure constants for Harish-Chandra modules of local distributions and give a visual description of their structure from which subquotients are easily obtained.
\end{abstract}

\maketitle

\section{Introduction}

Kleinian singularities are singular algebraic surfaces which are quotients of $\C^2$ by a finite subgroup of $SL(2,\C)$. Via McKay correspondence the latter come in types $ADE$.
Noncommutative analogues of (the algebra of functions on) Kleinian singularities were introduced by Hodges \cite{Hod1993} (type $A$) and Crawley-Boevey and Holland \cite{CraHol1998} (types $ADE$). They also classified the finite-dimensional simple modules.
Special cases include the algebras of invariants of the first Weyl algebra with respect to a finite subgroup of $SL(2,\C)$, studied by Futorny and Schwarz  \cite{FutSch2018a}.
In \cite{Tik2011} the type $D$ algebras were shown to be related to infinitesimal Hecke algebras of $\Fsl_2$. 

\emph{Galois orders}, introduced by Futorny and Ovsienko \cite{FutOvs2010,FutOvs2014} are certain subrings of invariant rings of skew monoid rings. All known Galois orders are actually of a type called \emph{principal Galois orders}, introduced in \cite{Har2020}. Examples of principal Galois orders include many algebras of interest in representation theory such as generalized Weyl algebras, (quantized) enveloping algebras and finite W-algebras associated with the geneal linear Lie algebra \cite{FutMolOvs2010,FutHar2014,Har2020} and quantum Coulomb branches \cite{Web2019}. Representation theory of Galois orders has been studied in \cite{FutOvs2010},\cite{FutOvs2014},\cite{Har2020},\cite{Web2019}.
The type $A$ noncommutative Kleinian singularities are examples of generalized Weyl algebras \cite{Bav1991,Ros1995} and are therefore examples of principal Galois orders with trivial group $G$ as shown in \cite{FutOvs2010}. Webster \cite{Web2019} showed that any principal Galois order $U$ is the ``spherical subalgebra'' $eFe$ of what he termed a \emph{principal flag order} $F$. The flag orders are easier to understand, and (in most cases of interest) are Morita equivalent to the original algebra $U$.

In this paper we focus on noncommutative Kleinian singularities of type $D$, which are denoted by $D(q)$ where $q=q(t)$ is a polynomial parameter.
Presentations and isomorphism problems for $D(q)$ were studied in \cite{Bod2006} and \cite{Lev2009}. In particular, $D(q)$ has a presentation with three generators denoted $u,v,w$ and four relations, see \eqref{eq:DqRels}.
Using an embedding result due to Boddington \cite{Bod2006}, we realize $D(q)$ as a principal Galois $\C[u]$-order with symmetry group $G$ of order $2$. The subalgebra $\C[u]$ is a polynomial algebra in one variable, and is maximal commutative in $D(q)$, and $D(q)$ is free (of infinite rank) as a left and right $\C[u]$-module. Besides serving as interesting new examples of Galois orders, as a simple consequence we obtain a new proof of the Gelfand-Kirillov conjecture for these algebras (proved by different methods in \cite{Cra1999}), generalizing \cite{FutSch2018a}. Secondly, we study the corresponding principal flag order, that we denote $F(q)$. It is Morita equivalent to $D(q)$, and is by definition a certain subalgebra of the nil-Hecke algebra of type $A_1^{(1)}$. We compute a very simple set of generators for the algebra $F(q)$ (see Theorem \ref{thm:main2} for details).

We also prove a dichotomy theorem for a class of algebras including $D(q)$ and $F(q)$, stating that any simple module is either $\C[u]$-torsionfree or a Harish-Chandra module (meaning locally finite) with respect to $\C[u]$. This is a generalization of the dichotomy theorem for simple modules over generalized Weyl algebras over Dedekind domains \cite{BavVan1997}.

Lastly, we construct singular Harish-Chandra modules realized on spaces of formal distributions. This gives a breakdown of the simple modules into different classes, see Figure \ref{fig:classes}.

\begin{figure}
\centering
\begin{tikzpicture}
\tikzstyle{level 1}=[sibling distance=12em]
\tikzstyle{level 2}=[sibling distance=6em]
\tikzstyle{every node}=[rectangle,draw]
\node {simple $D(q)$-module}
   child { node {$\C[u]$-torsionfree} 
   }
   child { node {Harish-Chandra}
      child { node {generic} }
      child { node {singular}
         child { node {half-integral} }
         child { node {integral} } 
      }
   };
\end{tikzpicture}
\caption{Classes of simple $D(q)$-modules.}
\label{fig:classes}
\end{figure}
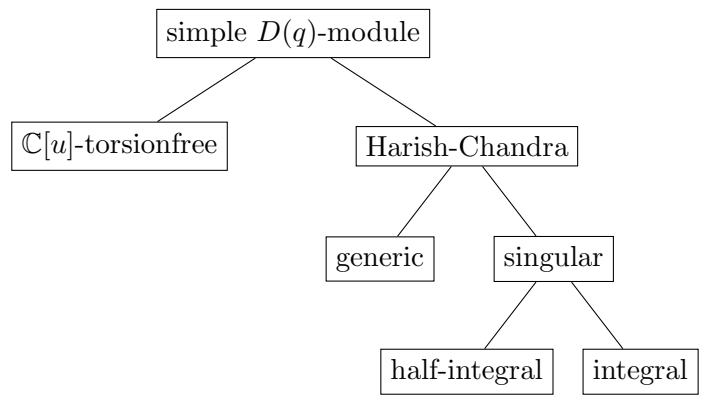

It is our hope that some of the techniques and features in this note could be of interest in future developments. An obvious question is whether the type $E$ noncommutative Kleinian singularities can also be realized as principal Galois orders, and furthermore, whether there is a conceptual and uniform realization of all ADE cases.

\subsection{Main Results}

\begin{Theorem} \label{thm:main1}
Let $q(t)$ be a polynomial of degree $n\ge 4$ and let $D(q)$ be the corresponding noncommutative Kleinian singularity. Then $D(q)$ can be realized as a principal Galois order. More precisely, letting $u,v,w$ denote the generators of $D(q)$ satisfying defining relations \eqref{eq:DqRels}, the following statements hold:
\begin{enumerate}[{\rm (i)}]
\item There exists an injective $\C$-algebra homomorphism
$\phi:D(q)\to \big(\C(x)\#\Z\big)^{S_2}$ given by
\begin{align}
\phi(u)&=x^2\\
\label{eq:phi-v-w}
\phi(-v-w)&=\frac{1}{2}\Big(\frac{q(x)}{\tfrac{1}{2}+x}\delta^{-1}+\frac{q(-x)}{\tfrac{1}{2}-x}\delta-\frac{q(-\tfrac{1}{2})}{(\tfrac{1}{2}+x)(\tfrac{1}{2}-x)}\Big) \\
\label{eq:phi-half-v-w}
\phi(-\tfrac{1}{2}v-w)&=\frac{1}{2}\Big(\frac{q(x)}{x}\delta^{-1}+\frac{q(-x)}{-x}\delta\Big)
\end{align}
where $\delta$ denotes a multiplicative generator of $\Z$, $\delta x = (x-1)\delta$, and the nontrivial element of $S_2$ negates $x$ and inverts $\delta$.
\item $\C(x^2)\phi\big(D(q)\big)=\big(\C(x)\#\Z\big)^{S_2}=\phi\big(D(q)\big)\C(x^2)$.
\item The image of $D(q)$ is contained in the subalgebra of $\big(\C(x)\#\Z\big)^{S_2}$ consisting of all elements which preserve $\C[x^2]$ with respect to the natural action of $\C(x)\#\Z$ on $\C(x)$.
\end{enumerate}
\end{Theorem}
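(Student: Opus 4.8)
The three statements are essentially independent, and I would prove them in turn, with (i) carrying the real content.

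\emph{Part (i).} The plan is to start from Boddington's embedding theorem \cite{Bod2006}, which realizes $D(q)$ inside an algebra assembled from a polynomial ring in one variable, a shift automorphism, and an order-two reflection --- i.e.\ inside (a localization or fixed subring of) a skew group ring of the type ``$\C[x]$ with the infinite dihedral group'', which after inverting the relevant set of polynomials gets identified with the algebra $\big(\C(x)\#\Z\big)^{S_2}$ appearing here. The first step is therefore to fix a precise dictionary: the polynomial variable $\leftrightarrow x$, the shift $\leftrightarrow$ conjugation by $\delta$, the reflection $\leftrightarrow$ the nontrivial element of $S_2$. Under this dictionary one rewrites Boddington's formulas for the images of the generators in the variables $x,\delta$; after the elementary base change replacing $v,w$ by the combinations $-v-w$ and $-\tfrac12 v-w$ (which together with $u$ still generate $D(q)$, as their difference is $-\tfrac12 v$), the resulting expressions should be exactly $\phi(u)=x^2$, \eqref{eq:phi-v-w} and \eqref{eq:phi-half-v-w}, and injectivity of $\phi$ is inherited from Boddington's embedding. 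Alternatively, if one prefers not to invoke \cite{Bod2006}, one can verify directly that $\phi(u)=x^2$, $\phi(v)=2\phi(-\tfrac12 v-w)-2\phi(-v-w)$ and $\phi(w)=\phi(-v-w)-2\phi(-\tfrac12 v-w)$ satisfy the four relations \eqref{eq:DqRels} --- a finite computation in $\C(x)\#\Z$ using $\delta f(x)=f(x-1)\delta$ and the partial-fraction identities behind the correction term $q(-\tfrac12)/\big((\tfrac12+x)(\tfrac12-x)\big)$ --- and then deduce injectivity by a filtered-degree argument: the associated graded of $D(q)$ is the coordinate ring of the type-$D$ surface, a hypersurface domain of Krull dimension two, while $\phi$ is strictly filtered for the filtration by the absolute value of the $\delta$-degree and its image has Gelfand--Kirillov dimension two, forcing $\ker\phi=0$. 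I expect the matching of conventions --- the direction of the shift, the normalization of $q$, the bookkeeping between the $S_2$-fixed subring and the ambient skew group ring, and the origin of the half-integer shifts in \eqref{eq:phi-v-w}--\eqref{eq:phi-half-v-w} --- to be the main obstacle; everything else in (i) is routine.

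\emph{Part (ii).} Here I would use the explicit left $\C(x^2)$-module decomposition $\big(\C(x)\#\Z\big)^{S_2}=\bigoplus_{k\ge0}M_k$, where $M_0=\C(x^2)$ and, for $k\ge1$, $M_k=\{f(x)\delta^k+f(-x)\delta^{-k}:f\in\C(x)\}$ is free of rank two over $\C(x^2)$, with $M_k\to\C(x)$, $f(x)\delta^k+f(-x)\delta^{-k}\mapsto f(x)$, an isomorphism of $\C(x^2)$-modules. The inclusion $\C(x^2)\phi(D(q))\subseteq\big(\C(x)\#\Z\big)^{S_2}$ is immediate from (i). For the reverse I would show $M_k\subseteq\C(x^2)\phi(D(q))$ by induction on $k$, the case $k=0$ being trivial. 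For the step, take the two length-$k$ words $a=\phi(-\tfrac12 v-w)^k$ and $b=\phi(-\tfrac12 v-w)^{k-1}\phi(-v-w)$, both in $\phi(D(q))\cap\bigoplus_{j\le k}M_j$. Using $\delta f(x)=f(x-1)\delta$, their $M_k$-components are $P(x)\delta^k+P(-x)\delta^{-k}$ and $P(x)\tfrac{2x-2k+2}{2x-2k+1}\delta^k+(\text{its }S_2\text{-partner})\delta^{-k}$, where $P(x)=\prod_{j=0}^{k-1}\big(-q(-(x-j))/(2(x-j))\big)\ne0$. Subtracting the lower-degree parts, which lie in $\bigoplus_{j\le k-1}M_j\subseteq\C(x^2)\phi(D(q))$ by hypothesis, shows that the $\C(x^2)$-submodule $\C(x^2)\phi(D(q))\cap M_k$ of $M_k$ projects onto a submodule of $\C(x)$ containing the $\C(x^2)$-span of $P$ and $P\cdot\tfrac{2x-2k+2}{2x-2k+1}$. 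Since $\tfrac1{2x-2k+1}=\tfrac{2x+2k-1}{4x^2-(2k-1)^2}$, that span contains $P$ and $P\cdot x\cdot c$ for a unit $c\in\C(x^2)^{\times}$, hence $P\cdot\big(\C(x^2)\oplus x\C(x^2)\big)=P\cdot\C(x)=\C(x)$; so the submodule is all of $M_k$, completing the induction. The identity $\phi(D(q))\C(x^2)=\big(\C(x)\#\Z\big)^{S_2}$ follows symmetrically (reading the words from the right), or by applying the anti-automorphism $f(x)\delta^n\mapsto\delta^{-n}f(x)$ of $\C(x)\#\Z$, which preserves the invariant subring and fixes $\C(x^2)$.

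\emph{Part (iii).} It suffices to check that $\phi(u)$, $\phi(-v-w)$ and $\phi(-\tfrac12 v-w)$ each carry $\C[x^2]$ into itself, since the elements of $\big(\C(x)\#\Z\big)^{S_2}$ with this property form a subalgebra and these three generate $\phi(D(q))$. For $\phi(u)=x^2$ this is clear. For the other two, applying the operator to $g(x^2)\in\C[x^2]$ produces a rational function that is automatically invariant under $x\mapsto-x$ --- it is the image of an $S_2$-invariant element under an $S_2$-invariant operator --- so it only remains to rule out poles. The sole candidate poles of $\phi(-v-w)\cdot g(x^2)$ are at $x=\pm\tfrac12$, and of $\phi(-\tfrac12 v-w)\cdot g(x^2)$ at $x=0$; at each such point the contributions of the two shift terms cancel against the correction term (respectively, against each other), which is precisely the role of the summand $q(-\tfrac12)/\big((\tfrac12+x)(\tfrac12-x)\big)$ in \eqref{eq:phi-v-w} and of the pairing of $q(x)/x$ with $q(-x)/x$ in \eqref{eq:phi-half-v-w}. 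Hence all three elements preserve $\C[x^2]$, proving (iii) --- and incidentally explaining the shape of the correction terms appearing in (i).
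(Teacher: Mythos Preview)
Your approach is essentially the paper's: for (i) you compose Boddington's embedding with an explicit identification of his target algebra with $\C(x)\#\Z$ (exactly what the paper does via its isomorphism $\psi:\tilde T(s)\to\C(x)\#\Z$), and (iii) is the same pole-cancellation check on the generators. For (ii) the idea is also the same---produce, by induction on $k$, two $S_2$-invariant elements of $\delta$-degree $k$ whose leading coefficients span $\C(x)$ over $\C(x^2)$---but the paper executes it more cleanly by staying in the generalized Weyl algebra $\tilde T(s)$ and using $a^n+b^n$ and $(a^n-b^n)h$, where the GWA relations $ab,\,ba\in\C(h^2)$ reduce the induction to a one-line computation, rather than multiplying out products of $\phi(-\tfrac12 v-w)$ and $\phi(-v-w)$.

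One caveat: your alternative for the right-hand equality in (ii), via the anti-automorphism $f(x)\delta^n\mapsto\delta^{-n}f(x)$, does not work as stated, because that map does not preserve $\phi(D(q))$ (apply it to $\phi(-\tfrac12 v-w)$ and compare coefficients of $\delta$). The paper instead uses the anti-involution $h\mapsto h$, $a\leftrightarrow b$ of $\tilde T(s)$, which \emph{does} stabilize the image of $D(q)$. Your primary ``read the words from the right'' argument is fine, though.
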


Parts (i) and (ii) of Theorem \ref{thm:main1} state that $D(q)$ is a \emph{Galois ring} with respect to the subalgebra $\C[u]$, in the sense of Futorny and Ovsienko \cite{FutOvs2010}. Part (iii) of Theorem \ref{thm:main1} states that $D(q)$ is a principal Galois order in the sense of \cite{Har2020}.

The second result provides a simple description of an algebra Morita equivalent to $D(q)$.

\begin{Theorem}\label{thm:main2}
Let $\hat W\cong\Z\rtimes S_2=\langle \delta,\tau\mid \tau\delta\tau^{-1}=\delta^{-1},\,\tau^2=1\rangle$ be the affine Weyl group of type $A_1^{(1)}$, and let $\hat W$ act on $\C(x)$ by $\C$-algebra automorphisms determined by $\delta(x)=x-1$, $\tau(x)=-x$. Then
$D(q)$ is Morita equivalent to the subalgebra $F(q)$ of $\C(x)\#\hat W$ generated by
\begin{equation}
\big\{q(x)s_0,\; s_1,\; x\big\},
\end{equation}
where $s_i$ are the divided difference operators
\begin{equation}
s_1=\frac{1}{2x}(1-\tau),\qquad s_0=\frac{1}{2x+1}(1-\delta^{-1}\tau).
\end{equation}
\end{Theorem}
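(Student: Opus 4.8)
The plan is to leverage Theorem \ref{thm:main1} together with Webster's general result that a principal Galois order $U$ is the spherical subalgebra $eFe$ of its principal flag order $F$, where (in the cases of interest) $e$ is a full idempotent so that $U$ and $F$ are Morita equivalent. First I would recall the precise definition of the principal flag order attached to the data of Theorem \ref{thm:main1}(iii): the symmetry group is $G=S_2$ acting on $\C(x)$ by $x\mapsto -x$, the ``lattice'' is $\Z$ acting by $x\mapsto x-1$, so the relevant extended group is the semidirect product $\hat W\cong \Z\rtimes S_2$, which is exactly the affine Weyl group of type $A_1^{(1)}$; and the nil-Hecke-type algebra in which $F(q)$ sits is $\C(x)\#\hat W$. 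The flag order $F(q)$ is then by definition generated, over the polynomial ring $\C[x]$ (equivalently with $x$ among the generators), by the ``divided difference'' operators attached to the two simple reflections of $\hat W$ — namely $s_1=\frac{1}{2x}(1-\tau)$ for the finite reflection and $s_0=\frac{1}{2x+1}(1-\delta^{-1}\tau)$ for the affine one — but with the affine generator twisted by the parameter, i.e.\ we must use $q(x)s_0$ rather than $s_0$ itself. This twist is forced precisely by the coefficient $\tfrac{q(\pm x)}{\tfrac12\pm x}$ appearing in \eqref{eq:phi-v-w}: the pole-cancellation that makes $\phi(-v-w)$ lie in the integral subalgebra of Theorem \ref{thm:main1}(iii) is the same mechanism that makes $q(x)s_0$ (rather than $s_0$) preserve the relevant lattice.

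The key steps, in order, are: (1) Set up $F(q)$ abstractly as the principal flag order of the Galois order data produced in Theorem \ref{thm:main1}, and invoke \cite{Web2019} to get the Morita equivalence $D(q)\sim_{\mathrm{Mor}} F(q)$ — this reduces everything to identifying $F(q)$ concretely inside $\C(x)\#\hat W$. (2) Show the generators $q(x)s_0$, $s_1$, $x$ all genuinely lie in $F(q)$: each must be checked to preserve the $G$-isotypic lattice structure, which amounts to verifying that applying it to $\C[x]$ (or the appropriate $\Z$-translated pieces) lands back inside the integral part; the finite divided difference $s_1$ and multiplication by $x$ are standard, and $q(x)s_0$ is where the degree-$\geq 4$ hypothesis on $q$ and the structure of \eqref{eq:phi-v-w}--\eqref{eq:phi-half-v-w} enter. (3) Conversely show these three elements generate all of $F(q)$: here I would translate the generators $u,v,w$ of $D(q)$ through $\phi$ and through Webster's identification $U=eFe$, expressing $e\cdot(\text{generator of }F(q))\cdot e$ in terms of $\phi(u),\phi(v),\phi(w)$, and then argue that since those account for $eFe$ and the off-diagonal ``corner'' pieces of $F$ are obtained from the diagonal ones by multiplying by the divided-difference operators, no further generators are needed. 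Concretely one computes, e.g., that $e s_1 e$ and $e\,q(x)s_0\,e$ recover $\phi(-v-w)$ and $\phi(-\tfrac12 v-w)$ up to elements of $\C[x^2]=\phi(\C[u])$, so that the subalgebra generated by $\{q(x)s_0,s_1,x\}$ surjects appropriately and, being contained in $F(q)$, must equal it.

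The main obstacle I anticipate is step (3), the reverse inclusion: one has to be careful about exactly which version of the flag-order formalism is being used (Webster's principal flag order vs.\ the nil-Hecke presentation), to pin down the idempotent $e$ and the module-category equivalence precisely enough to translate generators back and forth, and to control the poles — the operators $s_0,s_1$ have first-order poles at $x=-\tfrac12$ and $x=0$ respectively, and one needs the combinatorics of $\hat W$ acting on these poles to see that no ``new'' rational functions beyond $\C[x]$ and $q(x)$ sneak in as coefficients. A secondary technical point is verifying that the idempotent $e$ is full so that Morita equivalence (not merely a recollement) holds; this should follow from $q\neq 0$ together with the explicit form of the corner decomposition, but it must be stated. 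Everything else — the identification $\Z\rtimes S_2\cong \hat W(A_1^{(1)})$, the fact that $\tau,\delta$ generate with the braid/affine relations shown, and the containments in step (2) — is routine once the framework is in place.
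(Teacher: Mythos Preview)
Your overall framework is the paper's: use Theorem~\ref{thm:main1} to realize $D(q)$ as a principal Galois order, invoke Webster for the Morita equivalence with the associated flag order $F(q)$, and then identify generators of $F(q)$ inside $\C(x)\#\hat W$. However, your step~(2) contains a real gap. In Webster's setup (and in the paper) $F(q)$ is \emph{defined} as the subalgebra generated by $eU(q)e\cup\{s_1,x\}$, where $U(q)=\phi(D(q))$ and $e=\tfrac12(1+\tau)$; it is not defined as the set of all lattice-preserving elements of $\C(x)\#\hat W$. Checking that $q(x)s_0$ preserves $\C[x]$ (or $\La\# W$) therefore does not show $q(x)s_0\in F(q)$; it only shows it lies in the \emph{standard} (maximal) flag order, which could a priori be strictly larger. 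To prove $q(x)s_0\in F(q)$ you must actually express it using $e\phi(v)e$, $e\phi(w)e$, $s_1$, $x$, and this is where the work lies.

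The key computational device you are missing is the nil-Hecke identity $e + xe\tfrac{1}{x}=1$, which lets one split an arbitrary element of $\C(x)\#\hat W$ into four corner pieces each sandwiched by $e$'s. The paper uses this repeatedly to run a chain of equalities of generating sets, at each stage showing a new element lies in $F(q)$ \emph{and} that it regenerates the previous set, thereby handling both inclusions at once; the chain passes through $q(x)\delta^{-1}$ and $D=\tfrac{1}{\frac12+x}\bigl(q(x)\delta^{-1}-q(-\tfrac12)\tau\bigr)$ before reaching $q(x)s_0$. Your step~(3) sketch also has an error: $es_1e=0$ since $(1-\tau)(1+\tau)=0$, so $es_1e$ recovers nothing; what actually recovers $\phi(-\tfrac12 v-w)$ is $s_1\cdot q(x)\delta^{-1}\cdot e$, and $\phi(-v-w)$ is $eDe$. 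Without the nil-Hecke identity, neither direction of your argument can be carried out.
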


The third result says simple $D(q)$-modules are divided into two disjoint classes.

\begin{Theorem}
Let $\Ga$ be a commutative Harish-Chandra subalgebra of an algebra $U$. Suppose $\Ga$ is a noetherian integral domain of Krull dimension one. Then any simple $U$-module is either $\Ga$-torsionfree or a Harish-Chandra module with respect to $\Ga$.
\end{Theorem}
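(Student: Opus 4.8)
The plan is to exploit the standard structure theory of Harish-Chandra subalgebras together with the one-dimensional noetherian hypothesis on $\Ga$. Recall that $\Ga$ being a Harish-Chandra subalgebra of $U$ means that for any finitely generated $\Ga$-subbimodule $B$ of $U$, the $\Ga$-bimodule $B$ is finitely generated as a left (equivalently right) $\Ga$-module. First I would fix a simple $U$-module $M$ and suppose it is \emph{not} $\Ga$-torsionfree, i.e.\ there exists a nonzero $m_0\in M$ annihilated by some nonzero ideal $I_0\subseteq\Ga$. The goal is to show every $m\in M$ generates a finite-dimensional $\Ga$-submodule $\Ga m$. Since $M$ is simple, $m\in U m_0$, so it suffices to show that $\Ga u m_0$ is finite-dimensional over $\C$ for each $u\in U$, and then to note that any $m\in M$ is a finite sum of such elements.

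The key step is the following: given $u\in U$, let $B$ be the $\Ga$-subbimodule of $U$ generated by $u$; since $u$ is a single element, $B$ is finitely generated as a bimodule, hence by the Harish-Chandra property $B$ is finitely generated as a left $\Ga$-module, say by $b_1,\dots,b_k$. Then $\Ga u m_0 \subseteq B m_0 = \sum_i \Ga b_i m_0$. Now here is where I use that $m_0$ is torsion: we have $B I_0 \subseteq \Ga B$ (because $B$ is a \emph{bi}module, so right multiplication by the ideal $I_0\subseteq\Ga$ stays inside $B$ and can be absorbed, more precisely $B I_0$ is again a finitely generated left $\Ga$-submodule of $B$); the cleanest formulation is that $B I_0^N$ for large $N$ lands in $\sum_i \Ga b_i I_0'$ for some nonzero ideal $I_0'$. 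Concretely: each $b_i m_0$ is a vector in $M$; I want the $\Ga$-module it generates to be finite-dimensional, i.e.\ $\Ga/\ann_\Ga(b_i m_0)$ to be finite-dimensional over $\C$. For this I need a nonzero ideal of $\Ga$ killing $b_i m_0$. Since $b_i\in B$ and $B$ is a $\Ga$-bimodule, we can write, for $f\in\Ga$, $b_i f = \sum_j g_{ij}(f)\, b_j$ with $g_{ij}(f)\in\Ga$, organizing the right $\Ga$-action on the left-generators into a matrix $g(f)\in M_k(\Ga)$; choosing $f\in I_0$, applying Cayley--Hamilton to the matrix $g(f)$ over the commutative ring $\Ga$ yields a monic polynomial relation, giving a nonzero $h\in\Ga$ (a product/combination of characteristic polynomials evaluated appropriately, nonzero because $f\ne 0$ and $\Ga$ is a domain) with $b_i h \in \sum_j \Ga b_j I_0$, hence $h\cdot(b_i m_0)\in \sum_j \Ga b_j (I_0 m_0) = 0$. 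Thus $\ann_\Ga(b_i m_0)$ is a nonzero ideal of the one-dimensional noetherian domain $\Ga$, so $\Ga/\ann_\Ga(b_i m_0)$ is a noetherian ring of Krull dimension zero, hence artinian, hence finite-dimensional over $\C$ (being a finitely generated $\C$-algebra — or more directly, a finite length module over itself with all composition factors finite field extensions of $\C$; in the applications $\C$ is algebraically closed so these are just $\C$). Therefore $\Ga b_i m_0$ is finite-dimensional, and $\Ga u m_0 \subseteq \sum_i \Ga b_i m_0$ is finite-dimensional.

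To finish: an arbitrary $m\in M$ lies in $Um_0$ by simplicity, so $m=\sum_\ell u_\ell m_0$ for finitely many $u_\ell\in U$, whence $\Ga m\subseteq \sum_\ell \Ga u_\ell m_0$ is a finite sum of finite-dimensional spaces, hence finite-dimensional. This shows $M$ is a Harish-Chandra module with respect to $\Ga$ (locally finite over $\Ga$). The dichotomy follows, since a simple module cannot be both $\Ga$-torsionfree and possess a nonzero torsion element (the torsion elements form a submodule, which is either $0$ or all of $M$; if it is all of $M$ the above argument applies to show $M$ is Harish-Chandra, and conversely a Harish-Chandra module with $\Ga$ a domain of positive Krull dimension is automatically not torsionfree as long as $M\ne 0$, because any nonzero finite-dimensional $\Ga$-submodule is torsion).

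The main obstacle I anticipate is the bookkeeping in the Cayley--Hamilton step: one must verify that the ``characteristic polynomial'' construction genuinely produces a \emph{nonzero} element $h\in\Ga$ annihilating $b_i m_0$, which requires care about whether the matrix $g(f)$ can be nilpotent or have zero characteristic polynomial — here the hypothesis that $\Ga$ is a domain and $f\ne0$, together with faithfulness of $B$ as a right $\Ga$-module (which follows from $B\subseteq U$ and $\Ga\hookrightarrow U$), should rule this out, but stating it cleanly is the technical heart. A secondary point is to make sure the Harish-Chandra property is invoked in the exact form proved or assumed earlier (finite generation of $\Ga$-subbimodules as one-sided modules); if the paper's definition is phrased slightly differently one adjusts the first paragraph accordingly.
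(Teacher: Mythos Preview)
Your overall strategy is sound and amounts to proving directly what the paper obtains by citation: the paper finds an associated prime of $V$ (necessarily maximal by the Krull-dimension-one hypothesis), concludes that the generalized weight subspace $V'=\bigoplus_\Fm V_\Fm$ is nonzero, and then invokes \cite[Prop.~14]{DroFutOvs1994} to see that $V'$ is a $U$-submodule, hence all of $V$. Your argument is essentially an explicit proof of that last proposition in the special case at hand.

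The Cayley--Hamilton step, however, has a real gap that your proposed fix does not close. The matrix $G=(g_{ij}(f))$ depends on non-canonical choices (the $b_i$ need not be left-$\Ga$-independent), so $\det G$ can vanish even when right multiplication $\rho_f$ is injective on $B$; worse, $B=\Ga u\Ga$ need not be faithful as a right $\Ga$-module at all, since no domain hypothesis on $U$ is assumed and $u$ could be a right $\Ga$-zero-divisor in $U$. (There is also a harmless left/right slip: Cayley--Hamilton yields $h\,b_i\in\sum_j\Ga b_j I_0$, not $b_i h$; with that correction your deduction $h\cdot(b_im_0)=0$ is fine \emph{provided} $h\ne 0$.) The clean repair is to use the other half of the Harish-Chandra hypothesis: $B$ is also finitely generated as a \emph{right} $\Ga$-module, say $B=\sum_j c_j\Ga$; then $Bm_0=\sum_j c_j(\Ga m_0)$ is a finite sum of $\C$-linear images of the finite-length module $\Ga m_0$ (cyclic over the artinian ring $\Ga/I_0$), so $Bm_0$ has finite length and $\Ga u m_0\subseteq Bm_0$. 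No determinant is needed. If you prefer to keep left generators, argue instead that $K\otimes_\Ga(B/BI_0)=0$ because $I_0$ generates the unit ideal in $K=\op{Frac}\Ga$; thus $B/BI_0$ is a finitely generated torsion left $\Ga$-module and has a nonzero left annihilator $h$, whence $h\cdot Bm_0\subseteq BI_0m_0=0$.
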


As a direct corollary we obtain:

\begin{Corollary}
Every simple $D(q)$-module is either $\C[u]$-torsionfree or a generalized weight module with respect to $\C[u]$. The same is true for simple $F(q)$-modules with respect to its subalgebra $\C[x]$.
\end{Corollary}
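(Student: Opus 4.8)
The plan is to deduce both assertions from the preceding Theorem, applied once with $(U,\Ga)=(D(q),\C[u])$ and once with $(U,\Ga)=(F(q),\C[x])$; so the task is to verify the hypotheses in each case and to match ``Harish-Chandra module'' with ``generalized weight module'' for these particular $\Ga$. The easy hypotheses are immediate: $\C[u]\subseteq D(q)$ and $\C[x]\subseteq F(q)$ are each isomorphic to a polynomial ring in one variable over $\C$, hence commutative noetherian integral domains of Krull dimension one. It is equally routine that, for $\Ga$ such a ring, a $\Ga$-module $M$ is locally finite --- i.e.\ a Harish-Chandra module in the sense used here --- exactly when it is a generalized weight module with respect to $\Ga$: if $M$ is locally finite it is the sum of its finite-dimensional $\Ga$-submodules, each of which, by the structure theorem for finitely generated modules over a principal ideal domain together with the fact that $\C$ is algebraically closed, is a finite direct sum of modules $\Ga/\Fm^{k}$ with $\Fm\subseteq\Ga$ maximal; hence $M=\bigoplus_{\Fm}M_{\Fm}$ with $M_{\Fm}=\{v\in M:\Fm^{N}v=0\text{ for }N\gg0\}$, which is the definition of a generalized weight module, and the reverse implication is trivial.

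The one point requiring genuine input is that $\C[u]$ is a Harish-Chandra subalgebra of $D(q)$ and $\C[x]$ is one of $F(q)$. For $D(q)$ this follows from Theorem \ref{thm:main1}: parts (i)--(ii) express $D(q)$ as a Galois ring over $\C[u]$ in the sense of Futorny and Ovsienko, and in any Galois ring the distinguished commutative subalgebra is automatically a Harish-Chandra subalgebra \cite{FutOvs2010}. For $F(q)$ I would argue directly inside $R:=\C(x)\#\hat W$. Every $w\in\hat W$ acts on $\C(x)$ by an affine substitution $x\mapsto\pm x+c$, $c\in\Z$, hence restricts to a $\C$-algebra automorphism of $\C[x]$; consequently, for $f=\sum_{w}a_{w}w\in R$ (a finite sum, $a_{w}\in\C(x)$) and $p,q\in\C[x]$ one has $pfq=\sum_{w}p\,a_{w}\,w(q)\,w$, so $\C[x]\,f\,\C[x]$ is contained in the sub-bimodule $\bigoplus_{w\in\Supp f}a_{w}\C[x]w$ of $R$, which is free of finite rank both as a left and as a right $\C[x]$-module. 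As $\C[x]$ is noetherian, $\C[x]\,f\,\C[x]$ is finitely generated on each side; specializing to $f\in F(q)$ shows $\C[x]$ is a Harish-Chandra subalgebra of $F(q)$. (Alternatively this is contained in Webster's theory of principal flag orders \cite{Web2019}.)

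With all hypotheses in place, the Theorem immediately gives that every simple $D(q)$-module is $\C[u]$-torsionfree or locally finite over $\C[u]$, and every simple $F(q)$-module is $\C[x]$-torsionfree or locally finite over $\C[x]$; by the equivalence recorded in the first paragraph, the locally finite alternative is precisely that of being a generalized weight module, which is the Corollary. I foresee no real obstacle: the only delicate ingredient is the Harish-Chandra subalgebra property, which for $D(q)$ has essentially already been obtained in Theorem \ref{thm:main1}, and which for $F(q)$ rests on the elementary fact that $\hat W$ stabilizes $\C[x]$.
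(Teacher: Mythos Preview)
Your proof is correct and follows the same overall strategy as the paper: check that the pair $(U,\Ga)$ satisfies the hypotheses of the dichotomy theorem and apply it. The one place you diverge is in verifying that $\C[u]$ is a Harish-Chandra subalgebra of $D(q)$: the paper simply quotes Proposition~\ref{prp:Dq-properties}(ii) and (vi), where part~(vi) is an elementary direct check from the relations \eqref{eq:DqRel1}--\eqref{eq:DqRel2}, whereas you appeal to the Galois ring realization of Theorem~\ref{thm:main1} together with a general fact from \cite{FutOvs2010}. Both routes are valid; the paper's is more self-contained, yours more structural. Your explicit argument for the $F(q)$ case, using that every element of $\hat W$ stabilizes $\C[x]$, is a welcome addition, since the paper's body spells out only the $D(q)$ case.
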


In our final section we describe bases and structure constants for certain infinite-dimensional representations of $D(q)$ on spaces of algebraic distributions. We refer to Section \ref{sec:reps} for details.

\subsection*{Acknowledgements}
The author was partially supported by Simons Foundation Collaboration grant \#637600 and Army Research Office  grant W911NF-24-1-0058.

\section{Type \texorpdfstring{$D$}{D} Noncommutative Kleinian Singularities}

\subsection{Definition}

Following Boddington \cite[Thm.~4.3]{Bod2006}, we make the following definition.

\begin{Definition}
Let $q(t)\in\C[t]$ be a polynomial of degree $n\ge 4$ in an indeterminate $t$.
The \emph{noncommutative type $D$ Kleinian singularity} associated to $q(t)$, denoted $D(q)$, is the associative unital $\C$-algebra with generators $u,v,w$ subject to defining relations
\begin{subequations}\label{eq:DqRels}
\begin{align}
\label{eq:DqRel1}
[u,v]&=2w+v,\\
\label{eq:DqRel2}
[u,w]&=2vu+w+\rho,\\
\label{eq:DqRel3}
[v,w]&=-v^2-p_1(u),\\
\label{eq:DqRel4}
w^2&=v^2u+vw+\rho v + p_0(u),
\end{align}
\end{subequations}
where
\begin{equation}
\rho=2q(-\tfrac{1}{2}),
\qquad
p(t) = \frac{-4q(t)q(-t-1)+\rho^2}{(1+2t)^2}\in\C[t],
\end{equation}
and $p_i(t)\in\C[t]$ are determined by $p(t)=p_0(t^2)+p_1(t^2)t$.
\end{Definition}

\begin{Remark}
The given definition of $D(q)$ is equivalent to the noncommutative type $D$ Kleinian singularities of Crawley-Boevey and Holland \cite{CraHol1998}. A proof of this fact can be found in \cite[Sec.~6]{Bod2006}.
\end{Remark}

\begin{Remark}
We have $\deg p(t)=2n-2$, $\deg p_1(t)=n-2$. Declaring $\deg(u)=4$ and $\deg(v)=2n-4$, $\deg(w)=2n-2$, the left hand sides of \eqref{eq:DqRel1}--\eqref{eq:DqRel3} have top degrees $2n, 2n+2, 4n-6$ respectively while the right hand sides have degrees $2n-2, 2n, 4n-8$ respectively. Thus, this equips $D(q)$ with a filtration whose associated graded algebra is commutative. This commutative algebra is isomorphic to the algebra of invariants in $\C[x,y]$ with respect to the type $D_n$ Kleinian subgroup of $\SL(2,\C)$, see \cite{Bod2006} for details.
\end{Remark}

\subsection{Properties of \texorpdfstring{$D(q)$}{D(q)}}

\begin{Definition}[{\cite{DroFutOvs1994}}]
A commutative subalgebra $\Ga$ of an algebra $U$ is a \emph{Harish-Chandra subalgebra} if $\Ga u\Ga$ is finitely generated as a left and right $\Ga$-module for each $u\in U$.
\end{Definition}

\begin{Proposition}\label{prp:Dq-properties}
The following statements hold.
\begin{enumerate}[{\rm (i)}]
\item $\{u^iv^jw^k\mid i,j,k\in\N,\, k\le 1\}$ and $\{v^jw^ku^i\mid i,j,k\in\N, k\le 1\}$ are $\C$-bases for $D(q)$.
\item $u$ is algebraically independent over $\C$. Hence $\C[u]$ is a polynomial ring in $u$ over $\C$.
\item $D(q)$ is free as a left and right $\C[u]$-module.
\item If $X\in D(q)$ is such that $X\cdot f\in \C[u]$ for some nonzero $f\in\C[u]$, then $X\in\C[u]$.
\item There is an anti-isomorphism $\ast:D(q)\to D(q)$ given by $u^\ast=u, v^\ast=v, w^\ast=-w-v$.
\item $\C[u]$ is a Harish-Chandra subalgebra of $D(q)$.
\end{enumerate}
\end{Proposition}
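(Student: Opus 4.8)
The plan is to obtain the PBW statement (i) essentially from Boddington's presentation and then derive the remaining parts from it, the only genuinely new input appearing in (vi). For (i), read the four relations \eqref{eq:DqRels} as the rewriting system $vu\mapsto uv-2w-v$, $wu\mapsto uw-2vu-w-\rho$, $wv\mapsto vw+v^2+p_1(u)$, $w^2\mapsto v^2u+vw+\rho v+p_0(u)$. An induction on the number of occurrences of $v,w$ in a word shows that the monomials $u^iv^jw^k$ with $k\le1$ span $D(q)$ over $\C$, and symmetrically that the $v^jw^ku^i$ with $k\le1$ span. Linear independence is the content of Boddington's presentation \cite[Sec.~4, Thm.~4.3]{Bod2006}; alternatively it follows from Bergman's diamond lemma (checking that the overlap ambiguities involving $w^2$ resolve) or from the fact recorded in the Remark preceding the Proposition that the associated graded algebra for the filtration there is the commutative Kleinian coordinate ring $\C[x,y]^{D_n}$, which by classical invariant theory is free of rank two over the polynomial subalgebra generated by the symbols of $u$ and $v$, hence has $\C$-basis $\{\bar u^i\bar v^j\bar w^k : k\le1\}$; a standard filtered--graded comparison then promotes ``span'' to ``basis''. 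Part (ii) is then immediate, since the $u^i$ are among the basis vectors. For (iii), group the first basis of (i) by the pair $(j,k)$ to get $D(q)=\bigoplus_{j\in\N,\,k\le1}\C[u]\,v^jw^k$ as a left $\C[u]$-module and the second to get $D(q)=\bigoplus_{j\in\N,\,k\le1}v^jw^k\,\C[u]$ as a right $\C[u]$-module, both free with basis $\{v^jw^k : j\in\N,\,k\le1\}$.

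For (iv), work in the right $\C[u]$-module decomposition $D(q)=\bigoplus_{j,k}v^jw^k\C[u]$ from (iii) and write $X=\sum_{j,k}v^jw^k\,g_{jk}(u)$. Right multiplication by $f(u)$ preserves each summand: $Xf=\sum_{j,k}v^jw^k\,(g_{jk}f)(u)$. Since the $(0,0)$-summand is precisely $\C[u]$ and the sum is direct, $Xf\in\C[u]$ forces $g_{jk}f=0$ in the polynomial ring $\C[u]$ for every $(j,k)\neq(0,0)$, and as $f\neq0$ this gives $g_{jk}=0$, i.e.\ $X=g_{00}(u)\in\C[u]$.

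For (v), the map is forced: since $w=\tfrac12([u,v]-v)$ by \eqref{eq:DqRel1}, $D(q)$ is generated by $u,v$, and any anti-endomorphism fixing $u,v$ must send $w$ to $-\tfrac12([u,v]+v)=-(w+v)$. To see that $u,v,-w-v$ do define an anti-endomorphism, I would verify that these elements satisfy \eqref{eq:DqRels} with products taken in $D(q)^{\mathrm{op}}$; relations \eqref{eq:DqRel1}--\eqref{eq:DqRel3} fall out immediately, while \eqref{eq:DqRel4} reduces, after using \eqref{eq:DqRel3} and the commutator $[v^2,u]$, to an identity equivalent to \eqref{eq:DqRel4} itself --- the one moderately long computation in the Proposition. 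Finally $\ast^2$ fixes $u,v,w$ (as $(-w-v)^\ast=-(-w-v)-v=w$), hence $\ast^2=\mathrm{id}$ and $\ast$ is an involutive anti-automorphism.

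For (vi), introduce a second algebra filtration on $D(q)$ by assigning $u$ degree $0$ and $v,w$ degree $1$. It is well defined because every right-hand side of \eqref{eq:DqRels} has degree at most that of its left-hand side, and the rewriting moves of (i) never raise this degree; hence the degree-$\le m$ part of $D(q)$ equals $\Span_\C\{u^iv^jw^k : j+k\le m,\ k\le1\}$. In particular, for fixed $j,k$ every $v^jw^ku^i$ lies in the finite-rank free left $\C[u]$-module $M_{j,k}:=\sum_{b+c\le j+k,\,c\le1}\C[u]\,v^bw^c$, so $\C[u]v^jw^k\C[u]\subseteq M_{j,k}$, which is finitely generated over $\C[u]$ because $\C[u]$ is noetherian. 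For arbitrary $X=\sum g_{jk}(u)v^jw^k$ this gives $\C[u]X\C[u]\subseteq\sum_{(j,k)\in\Supp X}M_{j,k}$, finitely generated as a left $\C[u]$-module; applying the anti-automorphism $\ast$ of (v), which fixes $\C[u]$ elementwise, yields the right-module statement via $\C[u]X\C[u]=(\C[u]X^\ast\C[u])^\ast$. Thus $\C[u]$ is a Harish-Chandra subalgebra. The main obstacle is (i), whose substance is borrowed from \cite{Bod2006}; the only fresh ingredient is the auxiliary filtration used in (vi), and the rest is bookkeeping.
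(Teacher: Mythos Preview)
Your argument is correct. Parts (i)–(v) match the paper's treatment in content (the paper simply cites \cite{Lev2009} and \cite{Bod2006} for (i) and (iv), and calls (v) a straightforward check), though you supply more detail, in particular the explicit free-module argument for (iv). The genuine point of departure is (vi). The paper does not introduce a filtration; instead it observes that since $B(x+y)B\subseteq BxB+ByB$ and $BxyB\subseteq(BxB)(ByB)$ and $B=\C[u]$ is noetherian, it suffices to check the generators $v,w$, and then reads off from \eqref{eq:DqRel1}–\eqref{eq:DqRel2} that $u^kv,\,u^kw\in vB+wB+B$ for all $k\ge 0$, whence $BvB+BwB\subseteq vB+wB+B$ (and symmetrically $\subseteq Bv+Bw+B$). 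Your auxiliary filtration with $\deg u=0$, $\deg v=\deg w=1$ packages the same commutation information more structurally, yielding the uniform containment $\C[u]\,v^jw^k\,\C[u]\subseteq M_{j,k}$ in one stroke, and your use of $\ast$ to transfer the left-module statement to the right is a shortcut the paper does not take (it argues both sides by the symmetric computation). Both routes are short; yours is perhaps more portable to situations where the explicit commutators with $u$ are less clean.
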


\begin{proof}
(i) That the first set is a basis follows from Levy \cite[p.6]{Lev2009} (after a simple affine change of generators given in \cite[p.9]{Bod2006}). That the second set is a basis is proved in \cite[p.7]{Bod2006}.

(ii) and (iii) are immediate by (i).

(iv) See \cite[p.7]{Bod2006}.

(v) Straightforward to check from the defining relations \eqref{eq:DqRels}. One can also use Boddington's embedding to show this, see Remark \ref{rem:beta-theta}.

(vi) Let $B=\C[u]$. Since $B$ is noetherian and
$B(x+y)B\subseteq BxB+ByB$ and $BxyB\subseteq (BxB)(ByB)$ for all $x,y\in D(q)$, it suffices to show that $BvB$ and $BwB$ are contained in finitely generated as left and right $B$-submodules of $D(q)$. By \eqref{eq:DqRel1}--\eqref{eq:DqRel2} we have
\begin{gather}
uv = v(u+1)+w\cdot 2, \\ 
uw = w(u+1)+v\cdot(2u)+\rho.
\end{gather}
By induction on $k$ we have $u^k v\in vB+wB+1B$ and $u^k w \in vB + wB+1B$. Thus $BvB+BwB \subset vB + wB+1B$. Similarly one shows that $BvB+BwB\subset Bv+Bw+B1$.
\end{proof}

\subsection{Boddington's Embedding}
\label{sec:Boddington}

For $s(t)\in\C(t)$ let $\tilde{T}(s)$ be ring extension of the algebra $\C(h)$ of rational functions in an indeterminate $h$, by two elements $a,b$ subject to relations:
\begin{subequations}\label{eq:TsRels}
\begin{align}
ba&=s(h),\\
ab&=s(h-1),\\
ah&=(h-1)a, \label{eq:TsRel3}\\
bh&=(h+1)b. \label{eq:TsRel4}
\end{align}
\end{subequations}
Thus $\tilde{T}(s)$ is just the generalized Weyl algebra $\C(h)(\sigma,s)$ where $\sigma(h)=h-1$, see \cite{Bav1991}.

\begin{Theorem}[Boddington \cite{Bod2006}]
\label{thm:beta}
Given a polynomial $q(t)\in\C[t]$ of degree $n\ge 4$, let
\begin{equation} \label{eq:s-def}
s(t)=\frac{q(t)q(-t-1)}{t(t+1)(1+2t)^2} \in \C(h).
\end{equation}
There exists an injective algebra homomorphism
\begin{subequations} \label{eq:Boddington-map}
\begin{equation} 
\beta: D(q)\to \tilde{T}(s),
\end{equation}
given by:
\begin{align}
\beta(u) &= h^2, \\
\beta(v) &= a+b+\frac{2\rho}{1-4h^2}, \label{eq:beta3}\\
\beta(w) &= (a-b)h-\frac{\rho}{1-4h^2}. \label{eq:beta4}
\end{align}
\end{subequations}
\end{Theorem}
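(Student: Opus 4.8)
The plan is to prove two things: that the assignment on generators extends to an algebra homomorphism $\beta\colon D(q)\to\tilde T(s)$, and that it is injective.

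\emph{Well-definedness.} Since $D(q)$ is presented by $u,v,w$ and the relations \eqref{eq:DqRels}, it suffices to check that the elements $h^2$, $a+b+\tfrac{2\rho}{1-4h^2}$, $(a-b)h-\tfrac{\rho}{1-4h^2}$ of $\tilde T(s)$ satisfy those four relations in place of $u,v,w$. This is a direct computation inside the generalized Weyl algebra $\tilde T(s)=\C(h)(\sigma,s)$, using \eqref{eq:TsRel3}--\eqref{eq:TsRel4} to move $h$ past $a,b$ and \eqref{eq:TsRels} to replace $ba$ by $s(h)$ and $ab$ by $s(h-1)$. It is convenient to work in the $\Z$-grading $\tilde T(s)=\bigoplus_{k\in\Z}\C(h)z_k$ (with $z_1=a$, $z_{-1}=b$), so that $\beta(v)$ and $\beta(w)$ each have homogeneous components only in degrees $-1,0,1$; one may also keep in mind the compatible filtration with $\deg h=2$, $\deg a=\deg b=2n-4$, with respect to which the relations reduce to commutativity modulo lower-order terms, so that all the content sits in those corrections. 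Relations \eqref{eq:DqRel1}--\eqref{eq:DqRel3} are linear or quadratic and short. The quadratic relation \eqref{eq:DqRel4} is the essential one: expanding $w^2$ and $v^2u+vw+\rho v$ in the grading and matching the $z_0$-components reduces, after clearing denominators, to the polynomial identity
\[
-4q(t)q(-t-1)+\rho^2=(1+2t)^2\bigl(p_0(t^2)+p_1(t^2)t\bigr)
\]
(with $t$ replaced by $h$), which is immediate from the definitions of $\rho,p,p_0,p_1$ and $s(t)=\tfrac{q(t)q(-t-1)}{t(t+1)(1+2t)^2}$; the remaining components of \eqref{eq:DqRel4} then match automatically.

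\emph{Injectivity.} Here I would use the PBW basis $\{v^jw^ku^i\mid i,j,k\in\N,\, k\le 1\}$ of $D(q)$ from Proposition \ref{prp:Dq-properties}(i) together with the $\Z$-grading. In it $\beta(v)$ has degree-$1$ component $z_1$ and $\beta(w)$ has degree-$1$ component $(h-1)z_1$, so the top $\Z$-degree occurring in $\beta(v^jw^ku^i)$ is $j+k$, and, using $z_mf(h)=f(h-m)z_m$, its degree-$(j+k)$ component equals $(h-j)^{2i}z_j$ when $k=0$ and $(h-j-1)^{2i+1}z_{j+1}$ when $k=1$. Suppose $\xi=\sum c_{ijk}v^jw^ku^i$ is a nonzero element of $\ker\beta$, and let $M=\max\{j+k\mid c_{ijk}\ne 0\}$. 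If $M\ge 1$, the degree-$M$ component of $\beta(\xi)$ equals $z_M$ times $\sum_ic_{i,M,0}(h-M)^{2i}+\sum_ic_{i,M-1,1}(h-M)^{2i+1}$, and since the shifted powers $(h-M)^d$ are linearly independent over $\C$, its vanishing forces all $c_{i,M,0}$ and $c_{i,M-1,1}$ to be zero, contradicting the choice of $M$. If $M=0$, then $\xi\in\C[u]\setminus\{0\}$ and $\beta(\xi)=\xi(h^2)\ne 0$, since $h$ is transcendental. (Alternatively, $\beta$ is filtered for the filtrations above, and $\operatorname{gr}\beta$ sends $\bar u$ to $\bar h^2$ and $\bar v$ to $\bar a+\bar b$, which are algebraically independent; hence the image of $\operatorname{gr}\beta$ has Krull dimension $2$, and since $\operatorname{gr}D(q)$ is an integral domain of Krull dimension $2$ (the coordinate ring of the type $D_n$ Kleinian singularity), a nonzero $\ker\operatorname{gr}\beta$ would force that image to have dimension at most $1$. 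Therefore $\operatorname{gr}\beta$ is injective, and so is $\beta$, because the filtration on $D(q)$ is separated.)

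\emph{Main obstacle.} The principal difficulty is the verification of relation \eqref{eq:DqRel4}: it is the only genuinely quadratic relation and its lower-order part is governed by the polynomial $p_0$, so one must manipulate the rational function $s$ carefully and use the factorization of $-4q(t)q(-t-1)+\rho^2$ through $(1+2t)^2$ to match the $z_0$-parts of the two sides. A secondary and purely clerical difficulty in the direct injectivity argument is tracking the commutation of $h$ past $a$ and $b$ when extracting homogeneous components.
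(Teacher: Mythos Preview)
The paper does not supply its own proof of this theorem: it is quoted from Boddington \cite{Bod2006} and used as a black box for the rest of the paper, so there is no in-paper argument to compare yours against.

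That said, your proposal is correct and is the natural direct proof. The well-definedness check is exactly the right plan, and you have correctly identified that the nontrivial content sits in the identity $-4q(t)q(-t-1)+\rho^2=(1+2t)^2\bigl(p_0(t^2)+p_1(t^2)t\bigr)$; note only that relation \eqref{eq:DqRel3}, not just \eqref{eq:DqRel4}, already requires its odd part, since $p_1$ appears there, so it is not quite as ``short'' as \eqref{eq:DqRel1}--\eqref{eq:DqRel2}. Your injectivity argument via the PBW basis $\{v^jw^ku^i\}$ and the $\Z$-grading of $\tilde T(s)$ is clean and correct: the leading components $(h-M)^{2i}z_M$ (for $k=0$, $j=M$) and $(h-M)^{2i+1}z_M$ (for $k=1$, $j=M-1$) are computed correctly from $a^jh=(h-j)a^j$, and the linear independence of the shifted powers $(h-M)^d$ over $\C$ finishes the job. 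The alternative filtered/graded argument you sketch also works.
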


\begin{Remark}\label{rem:beta-theta}
It is easy to see that the algebra $\tilde{T}(s)$ (being a generalized Weyl algebra) has an anti-automorphism $\ast$ given by $h^\ast=h,\; a^\ast=b,\; b^\ast=a$. Moreover, the image of $\beta$ is preserved by this anti-automorphism: $\beta(u)$ and $\beta(v)$ are fixed by $\ast$ while $\beta(w)^\ast=-\beta(w)-\beta(v)$ using \eqref{eq:beta3},\eqref{eq:beta4}. Since $\beta$ is injective, $x\mapsto \beta^{-1}(\beta(x)^\ast)$ defines an anti-automorphism of $D(q)$, which coincides with $\ast$ from Proposition \ref{prp:Dq-properties}(v).
\end{Remark}

\section{Principal Galois Orders}

\subsection{Definition}
Galois orders were defined in \cite{FutOvs2010}. We recall the definition of principal Galois (respectively flag) orders from \cite{Har2020} (respectively \cite{Web2019}).
Let $\La$ be a noetherian integrally closed domain with field of fractions $L$. Let $\mathcal{M}$ be a monoid acting faithfully by ring automorphisms on $\La$ (hence on $L$).
Let 
\begin{equation}
\mathcal{L}=L\#\mathcal{M}
\end{equation}
be the skew monoid ring. It is the free left $L$-module on the set $\mathcal{M}$ with multiplication determined by $(a\mu)\cdot (b\nu)=a\mu(b)\mu\nu$ ($a,b\in L,\,\mu,\nu\in\mathcal{M}$). We have natural embeddings of rings:
\begin{equation}\label{eq:L-diagram1}
\La \longrightarrow L\longrightarrow \mathcal{L}.
\end{equation}
Let $W$ be a finite group, acting faithfully by ring automorphisms of $\mathcal{L}$ preserving $\La$ and $\CM$.\footnote{These assumptions force $W$ to act on $\CM$ by conjugation and $\CM\CM^{-1}\cap W=1$.}
The diagram \eqref{eq:L-diagram1} fits into a larger commutative diagram of injective ring homomorphisms:
\begin{equation}\label{eq:L-diagram2}
\begin{tikzcd}
\La\# W \arrow{r}    & L\# W \arrow{r}    & \CL\# W \\
\La     \arrow{u}\arrow{r} & L     \arrow{u}\arrow{r} & \CL \arrow{u}\\
\La^W   \arrow{u}\arrow{r} & L^W   \arrow{u}\arrow{r} & \CL^W \arrow{u}
\end{tikzcd}
\end{equation}
For a minor rephrasing of the definition of principal flag order, we need an observation.
\begin{Lemma}
\begin{enumerate}[{\rm (a)}]
\item The unique left $L\# W$-module homomorphism $\ep:\CL\# W\to L\# W$ determined by $\ep(\mu)=1$ for all $\mu\in\CM$ satisfies
\begin{equation}
\ep(X\ep(Y)) = \ep(XY),\qquad\forall X,\,Y\in\CL\#W.
\end{equation}
\item There is a ring homomorphism $\CL\# W\to\End(L\# W)$, $X\mapsto \hat X$, given by
\begin{equation}
\hat X(a) = \ep(Xa), \qquad \forall X\in\CL\# W,\, a\in L\# W.
\end{equation}
\end{enumerate}
\end{Lemma}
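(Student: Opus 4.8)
The plan is to first make precise the module structures underlying the statement, then reduce part (a) to two one‑line identities, and finally observe that part (b) is a formal consequence of part (a). As a left $L$-module, $\CL\#W$ is free with basis $\{\mu w : \mu\in\CM,\ w\in W\}$, since $\CL = L\#\CM$ is free over $L$ on $\CM$ and $\CL\#W$ is free over $\CL$ on $W$. Because $W$ preserves $\CM$, the automorphism $w\in W$ acts on $\mu\in\CM$ by conjugation, so $w\mu = {}^w\!\mu\cdot w$ with ${}^w\!\mu := w\mu w^{-1}\in\CM$; equivalently $\mu w = w\cdot{}^{w^{-1}}\!\mu$. From this last identity I would deduce that $\CM$ generates $\CL\#W$ as a left $L\#W$-module, and in fact freely so: linear independence follows by passing to coefficients in the $L$-basis $\{\mu w\}$ and using that $\nu\mapsto{}^w\!\nu$ permutes $\CM$. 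Hence the left $L\#W$-linear map $\ep$ with $\ep(\mu)=1$ exists and is unique; concretely it is the left $L$-linear map determined by $\ep(\mu w)=w$, and in particular $\ep$ restricts to the identity on $L\#W$.

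The heart of part (a) is two elementary identities. Writing an arbitrary $Z\in\CL\#W$ as a finite sum $Z=\sum b_{\nu,v}\,\nu v$ with $b_{\nu,v}\in L$, one gets first $\ep(Zw)=\ep(Z)w$ for $w\in W$, immediate from $\nu v\cdot w=\nu\cdot vw$, and second $\ep(Z\mu)=\ep(Z)$ for $\mu\in\CM$, since $\nu v\cdot\mu=(\nu\,{}^v\!\mu)\cdot v$ with $\nu\,{}^v\!\mu\in\CM$. Granting these, write $Y=\sum_{\mu,w}b_{\mu,w}\,\mu w$ with $b_{\mu,w}\in L$, so $\ep(Y)=\sum_{\mu,w}b_{\mu,w}\,w$. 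Then, using additivity of $\ep$ and the two identities,
\[
\ep(XY)=\sum_{\mu,w}\ep\big(Xb_{\mu,w}\mu\cdot w\big)=\sum_{\mu,w}\ep\big(Xb_{\mu,w}\mu\big)\,w=\sum_{\mu,w}\ep\big(Xb_{\mu,w}\big)\,w,
\]
while $\ep\big(X\ep(Y)\big)=\sum_{\mu,w}\ep\big(Xb_{\mu,w}w\big)=\sum_{\mu,w}\ep\big(Xb_{\mu,w}\big)\,w$, and the two expressions agree.

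For part (b), each $\hat X$ is an additive (in fact $\C$-linear) endomorphism of $L\#W$, the assignment $X\mapsto\hat X$ is visibly additive, and $\hat 1=\mathrm{id}$ because $\ep$ restricts to the identity on $L\#W$; multiplicativity $\widehat{XY}=\hat X\hat Y$ is precisely part (a) applied with $Ya$ in place of $Y$, since $\widehat{XY}(a)=\ep\big(X(Ya)\big)$ and $\hat X\hat Y(a)=\ep\big(X\,\ep(Ya)\big)$. The one point calling for genuine care is the setup: checking that $\ep$ is well defined, i.e.\ that $\CM$ is a free basis for $\CL\#W$ over $L\#W$, and bearing in mind throughout that $\ep$ is left $L\#W$-linear but \emph{not} right $L\#W$-linear — the two identities above are exactly the substitute this forces, and everything else is formal.
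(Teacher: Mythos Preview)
Your proof is correct and follows essentially the same approach as the paper: both reduce to verifying the identity on $L$-basis elements of $\CL\#W$, with part (b) deduced formally from (a) by substituting $Ya$ for $Y$. The paper compresses (a) into a single line by taking $X=fw\mu$, $Y=f'w'\mu'$ and noting both sides equal $f\cdot w\cdot\mu(f')\cdot w'$, whereas you organize the same computation through the two intermediate identities $\ep(Zw)=\ep(Z)w$ and $\ep(Z\mu)=\ep(Z)$ and are more explicit about why $\CM$ is a free left $L\#W$-basis---but the substance is the same.
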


\begin{proof}
(a) By $\Z$-bilinearity, it suffices to verify this for $X=fw\mu, Y=f'w'\mu'$ in which case both sides simplify to $f\cdot w\cdot\mu(f')\cdot w'$. 

(b) $(\hat X\circ\hat Y)(a)=\ep\big(X\ep(Ya))$ while $\hat{XY}(a)=\ep(XYa)$. Use (a) with $Ya$ for $Y$.
\end{proof}

Using this action of $\CL\# W$ on $L\# W$ we can state the definitions of principal flag/Galois order as follows.

\begin{Definition}\phantom{X}
\begin{enumerate}[{\rm (i)}]
\item A $\La\# W$-subring $F$ of $\CL\# W$ is a \emph{principal flag order} if every $X\in F$ preserves $\La\# W$ and $(L\# W)F=\CL\# W$.
\item A $\La^W$-subring $U$ of $\CL^W$ is a \emph{principal Galois order} if every $X\in U$ preserves $\La^W$ and $L^W U=\CL^W$.
\end{enumerate}
\end{Definition}

\begin{Remark}
A $\La\# W$-subring $F$ of $\CL\# W$ is a principal flag order if and only if every $X\in F$ preserves $\La$ and $LF=\CL\# W$.
\end{Remark}

\subsection{Realization of \texorpdfstring{$D(q)$}{D(q)} as a Principal Galois Order}

Let $\La=\C[x]$ be the algebra of polynomials in one variable $x$ and $L=\C(x)$ its field of fractions.
Let $W=\langle \tau\rangle\cong S_2$ be the group of order two with generator $\tau$, acting by $\C$-algebra automorphisms on $\La$ via $\tau(p(x))=p(-x)$.
Let $\mathcal{M}=\langle\delta\rangle\cong\Z$ act on $\La$ via shift automorphisms: $\delta(f(x))=f(x-1)$ for all $f(x)\in\C[x]$. Then $\tau\circ\delta^k\circ\tau^{-1}=\delta^{-k}$ for all $k\in\Z$, so $W$ normalizes $\mathcal{M}$.
Let $L\#\mathcal{M}=\C(x)\#\Z$ be the smash product (skew group algebra) of $\Z$ over $\C(x)$, $\La^W=\C[x^2]$, $L^W=\C(x^2)$, and $(L\#\CM)^W$ the corresponding subalgebras of $W$-invariants.

In the following lemma we observe that, for any $s(t)$ of the form \eqref{eq:s-def}, there is an isomorphism $\tilde{T}(s)\cong L\#\Z$ that behaves well with $W$.

\begin{Lemma} \label{lem:psi}
Let $q(t)\in\C[x]$ be a polynomial of degree $\ge 4$, and let $s(t)$ be as in \eqref{eq:s-def}. Let $\tilde T(s)$ be the corresponding algebra from Section \ref{sec:Boddington}.
There exists an isomorphism
\begin{subequations}\label{eq:psi-isomorphism}
\begin{equation}
\psi:\tilde{T}(s)\to \C(x)\#\Z,
\end{equation}
determined by
\begin{equation}
\psi(h)=x,\qquad \psi(a)=f(x)\delta, \qquad \psi(b)=f(-x)\delta^{-1},
\end{equation}
\end{subequations}
where
\begin{equation}
f(x)=\frac{1}{2}\frac{q(-x)}{(-x)(\tfrac{1}{2}-x)}\in\C(x).
\end{equation}
Moreover, we have
\begin{equation}\label{eq:tau-on-hab}
\tau(x)=-x,\qquad \tau(\psi(a))=\psi(b),\qquad \tau(\psi(b))=\psi(a).
\end{equation}
\end{Lemma}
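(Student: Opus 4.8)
\emph{Proof strategy.} The idea is to build $\psi$ from the skew Laurent polynomial description of $\tilde T(s)$, then check bijectivity, and finally read off \eqref{eq:tau-on-hab}. First note that, since $\C(h)$ is a field and $s(h)\ne 0$, the element $s(h)$ is a unit; hence in $\tilde T(s)$ the generator $a$ is invertible with $a^{-1}=s(h)^{-1}b$, and (as is standard for generalized Weyl algebras over a field) $\tilde T(s)$ is the skew Laurent polynomial ring $\C(h)[a^{\pm 1};\sigma]$, $\sigma(h)=h-1$, free as a left $\C(h)$-module on $\{a^k\mid k\in\Z\}$. Likewise $\C(x)\#\Z=\C(x)[\delta^{\pm 1};\sigma']$ with $\sigma'(g(x))=g(x-1)$. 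The field isomorphism $\C(h)\to\C(x)$, $h\mapsto x$, intertwines $\sigma$ and $\sigma'$, and $f(x)\delta$ is a unit of $\C(x)\#\Z$ (as $f\ne 0$) satisfying $(f(x)\delta)\,g(x)=\sigma'(g)(x)\,(f(x)\delta)$ for all $g\in\C(x)$; by the universal property of the skew Laurent extension this produces a $\C$-algebra homomorphism $\psi\colon\tilde T(s)\to\C(x)\#\Z$ with $\psi(h)=x$ and $\psi(a)=f(x)\delta$.

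For bijectivity, observe that $\psi$ carries the left $\C(h)$-basis $\{a^k\mid k\in\Z\}$ of $\tilde T(s)$ to the family $\{c_k(x)\delta^k\}$, where $c_k\in\C(x)$ is the nonzero rational function obtained by expanding $(f(x)\delta)^k$ (a product of shifts of $f$, or of shifts of $f(x+1)^{-1}$). Since each $c_k$ is a unit, $\{c_k(x)\delta^k\}$ is again a left $\C(x)$-basis of $\C(x)\#\Z$, and since $\psi|_{\C(h)}$ is an isomorphism onto $\C(x)$, it follows that $\psi$ is bijective.

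It remains to identify $\psi(b)$ and confirm the relation involving $s$. From $ba=s(h)$ and the invertibility of $a$ we have $b=s(h)a^{-1}$ in $\tilde T(s)$, so $\psi(b)=s(x)(f(x)\delta)^{-1}=s(x)f(x+1)^{-1}\delta^{-1}$; hence $\psi(b)=f(-x)\delta^{-1}$, as asserted in \eqref{eq:psi-isomorphism}, exactly when
\[
f(-x)\,f(x+1)=s(x).
\]
Substituting $f(-x)=\frac{q(x)}{2x(\tfrac{1}{2}+x)}$ and $f(x+1)=\frac{q(-x-1)}{2(x+1)(\tfrac{1}{2}+x)}$ and using $(\tfrac{1}{2}+x)^2=\tfrac{1}{4}(1+2x)^2$, the product collapses to $\frac{q(x)q(-x-1)}{x(x+1)(1+2x)^2}$, which is $s(x)$ by \eqref{eq:s-def}. (With $a$ invertible, the relations \eqref{eq:TsRel3}--\eqref{eq:TsRel4} and $ab=s(h-1)$ hold automatically in $\tilde T(s)$, so nothing further need be checked; alternatively one can present $\tilde T(s)$ by the four relations \eqref{eq:TsRels} and verify each directly, the relation $ab=s(h-1)$ being the parallel identity $f(x)f(1-x)=s(x-1)$.) I expect this rational-function identity, with its bookkeeping of signs and of the factor $(1+2t)^2$ against $(\tfrac{1}{2}+t)^2$, to be the only step requiring genuine care; everything else is formal. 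Finally \eqref{eq:tau-on-hab} is immediate: the nontrivial element $\tau\in W$ acts on $\C(x)\#\Z$ by the ring automorphism with $\tau(g(x))=g(-x)$ and $\tau(\delta)=\delta^{-1}$, so $\tau(\psi(a))=\tau(f(x)\delta)=f(-x)\delta^{-1}=\psi(b)$ and, symmetrically, $\tau(\psi(b))=f(x)\delta=\psi(a)$.
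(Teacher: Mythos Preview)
Your argument is correct and rests on the same key identity $f(-x)f(x+1)=s(x)$ that the paper checks. The paper's own proof simply defines $\psi$ on all three generators $h,a,b$ and verifies the relations \eqref{eq:TsRels} directly, whereas you first recognize $\tilde T(s)$ as the skew Laurent ring $\C(h)[a^{\pm1};\sigma]$, invoke its universal property to produce $\psi$, and then \emph{derive} the formula for $\psi(b)$ from $b=s(h)a^{-1}$. The payoff of your route is that bijectivity is made explicit via the basis argument, a point the paper leaves entirely implicit; conversely, the paper's approach avoids having to justify the skew Laurent presentation of $\tilde T(s)$. Both are short and essentially equivalent.
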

\begin{proof} It suffices to check that the relations \eqref{eq:TsRels} are preserved. We have
\begin{equation}
\psi(b)\psi(a) = f(-x)\delta^{-1}f(x)\delta=f(-x)f(x+1)=s(x)=\psi\big(s(h)\big).
\end{equation}
The others are shown similarly. Lastly, the equalities \eqref{eq:tau-on-hab} are immediate by the definition of $\tau$.
\end{proof}

Combining Boddington's embedding with the isomorphism $\psi$ just constructed, we obtain an embedding of $D(q)$ in the smash product $\C(x)\# \Z$ with nice properties.

\begin{Lemma} \label{lem:psibeta}
Let $\beta$ be Boddington's embedding \eqref{eq:Boddington-map} and $\psi$ be the isomorphism \eqref{eq:psi-isomorphism}. Let $U$ be the image in $\C(x)\#\Z$ of the composition $\psi\circ\beta$. Then
\begin{enumerate}[{\rm (i)}]
\item $U$ is contained in the invariant subalgebra $\big(\C(x)\#\Z\big)^{S_2}$,
\item $a^n+b^n$ and $(a^n-b^n)x$ belong to $\C(x^2)U$ for all integers $n>0$,
\item $\C(x^2)U=\big(\C(x)\#\Z\big)^{S_2}=U\C(x^2)$.
\end{enumerate}
\end{Lemma}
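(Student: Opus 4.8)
The plan is to work inside $\CL=\C(x)\#\Z$ with its $\Z$-grading $\CL=\bigoplus_{k}\C(x)\delta^{k}$ and to analyze $\C(x^{2})U$ one graded piece at a time. First, by Lemma~\ref{lem:psi} the generators of $U=\psi\big(\beta(D(q))\big)$ are, writing $a=f(x)\delta$ and $b=f(-x)\delta^{-1}$, $u'=x^{2}$, $v'=a+b+\tfrac{2\rho}{1-4x^{2}}$, and $w'=(a-b)x-\tfrac{\rho}{1-4x^{2}}$. Since $\tau$ acts on $\CL$ by $\tau(g(x)\delta^{k})=g(-x)\delta^{-k}$, a one-line inspection shows $\tau$ fixes each of $u',v',w'$ (for $w'$: $\tau((a-b)x)=(b-a)(-x)=(a-b)x$), so $U$, being generated by $\tau$-fixed elements, satisfies $U\subseteq\CL^{S_{2}}$, which is (i). Here $\CL^{S_{2}}=(\C(x)\#\Z)^{S_{2}}$ decomposes as $\bigoplus_{k\ge0}M_{k}$, where $M_{0}=\C(x^{2})$ and $M_{k}=\{p(x)\delta^{k}+p(-x)\delta^{-k}:p\in\C(x)\}$ for $k\ge1$, a free left $\C(x^{2})$-module with basis $\delta^{k}+\delta^{-k}$, $x(\delta^{k}-\delta^{-k})$.

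Next I would prove the key identity in degree one. Put $\alpha=a+b=v'-\tfrac{2\rho}{1-4x^{2}}$ and $\gamma=(a-b)x=w'+\tfrac{\rho}{1-4x^{2}}$; both lie in $M_{1}$, and since $x^{2}=u'\in U$ we have $\C(x^{2})\subseteq\C(x^{2})U$ and hence $\alpha,\gamma\in\C(x^{2})U$. The claim is $M_{1}=\C(x^{2})\alpha+\C(x^{2})\gamma$. The inclusion $\supseteq$ is immediate; for $\subseteq$, given $p\in\C(x)$ write $p=f(x)\big(\lambda+\mu(x-1)\big)$ with $\lambda,\mu\in\C(x^{2})$, which is possible because $f(x)\neq0$ and $\{1,x-1\}$ is a $\C(x^{2})$-basis of $\C(x)$; then $\lambda\alpha+\mu\gamma$ has $\delta$-coefficient $p(x)$, and being $\tau$-invariant its $\delta^{-1}$-coefficient must equal $p(-x)$, so $\lambda\alpha+\mu\gamma=p(x)\delta+p(-x)\delta^{-1}$.

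With this in hand, $\C(x^{2})U$ becomes a subring of $\CL^{S_{2}}$. For $g\in\C(x^{2})$ and $z\in\{u',v',w'\}$ the commutator $[z,g]$ is $\tau$-invariant and supported in $\Z$-degrees $\pm1$, hence $[z,g]\in M_{1}=\C(x^{2})\alpha+\C(x^{2})\gamma\subseteq\C(x^{2})U$, so $zg=gz+[z,g]\in\C(x^{2})U$. Therefore $R:=\{X\in\CL^{S_{2}}:X\C(x^{2})\subseteq\C(x^{2})U\}$ is a subring (for products, $(XY)\C(x^{2})=X(Y\C(x^{2}))\subseteq X(\C(x^{2})U)\subseteq(X\C(x^{2}))U\subseteq\C(x^{2})U$) containing $\C(x^{2})$ and $u',v',w'$; it thus contains the subring those generate, so $\C(x^{2})U\subseteq R$, while $X=X\cdot1\in X\C(x^{2})$ gives $R\subseteq\C(x^{2})U$. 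Hence $\C(x^{2})U=R$ is a subring of $\CL^{S_{2}}$ containing $\delta+\delta^{-1}\in M_{1}$; multiplying $\delta^{k}+\delta^{-k}$ and $x(\delta^{k}-\delta^{-k})$ on the right by $\delta+\delta^{-1}$ and subtracting the $M_{k-1}$-part yields $\delta^{k+1}+\delta^{-k-1}$ and $x(\delta^{k+1}-\delta^{-k-1})$, so inductively $M_{k}\subseteq\C(x^{2})U$ for all $k$, whence $\C(x^{2})U=\bigoplus_{k}M_{k}=\CL^{S_{2}}$, the first half of (iii). Claim (ii) follows at once, since $a^{n}+b^{n}=F_{n}(x)\delta^{n}+F_{n}(-x)\delta^{-n}$ and $(a^{n}-b^{n})x=F_{n}(x)(x-n)\delta^{n}-F_{n}(-x)(x+n)\delta^{-n}$, with $F_{n}(x)=\prod_{j=0}^{n-1}f(x-j)$, both lie in $M_{n}\subseteq\C(x^{2})U$.

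Finally, for $U\C(x^{2})=\CL^{S_{2}}$ I would transport the anti-automorphism $\ast$ of $\tilde T(s)$ from Remark~\ref{rem:beta-theta} through the isomorphism $\psi$ to an anti-automorphism $\star$ of $\CL$: since $\ast$ fixes $\C(h)$ pointwise and preserves $\beta(D(q))$, $\star$ fixes $\C(x)\supseteq\C(x^{2})$ pointwise and preserves $U$; and since $\ast$ commutes with the automorphism $h\mapsto-h$, $a\leftrightarrow b$ of $\tilde T(s)$ (checked on generators, and this automorphism is exactly $\psi^{-1}\tau\psi$), $\star$ commutes with $\tau$ and so preserves $\CL^{S_{2}}$. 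Applying $\star$ to $\C(x^{2})U=\CL^{S_{2}}$ then gives $U\C(x^{2})=\CL^{S_{2}}$. I expect the main obstacle to be the degree-one identity $M_{1}=\C(x^{2})\alpha+\C(x^{2})\gamma$ together with the commutator observation: this is precisely where the shape of Boddington's embedding — that $v'$ and $w'$ supply the complementary pieces $a+b$ and $(a-b)x$ — and the nonvanishing of $f(x)$ enter, and it is what forces $\C(x^{2})U$ to be closed under multiplication. The remaining steps reduce to bookkeeping with the $\Z$-grading of $\CL$ and the $S_{2}$-action on it.
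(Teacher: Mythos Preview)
Your proof is correct, and in several respects it is more careful than the paper's. Both arguments share the same skeleton---show that $a+b$ and $(a-b)x$ land in $\C(x^2)U$, then climb inductively through the $\Z$-degrees of $\CL^{S_2}$---but the two proofs organize the work differently.

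The paper proves (ii) first, arguing that $(a+b)^2=a^2+b^2+(ab+ba)$ with $ab+ba\in\C(h^2)$, and then inducts on powers of $a,b$; from (ii) it derives (iii) by writing an arbitrary invariant $\sum_n x_nf_n(h)$ (with $x_n=a^n,1,b^{|n|}$) in the form $g_0^+(h^2)+\sum_{n>0}\big[(a^n+b^n)g_n^+(h^2)+(a^n-b^n)h\,g_n^-(h^2)\big]$. This is terse: the induction in (ii) and the rewriting in (iii) both tacitly use that $\C(x^2)U$ is closed under multiplication (and under right multiplication by $\C(x^2)$), and the paper never addresses the $U\C(x^2)$ equality explicitly. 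You reverse the order, proving (iii) first and reading off (ii) as a corollary, and you supply exactly the missing ingredients: your $R$-argument establishes that $\C(x^2)U$ is a subring by reducing to the commutator identity $[z,g]\in M_1$ for generators $z$, and your transport of the anti-involution $\ast$ through $\psi$ gives a clean proof of $U\C(x^2)=\CL^{S_2}$. Your choice to induct using the basis $\{\delta^k+\delta^{-k},\,x(\delta^k-\delta^{-k})\}$ and right multiplication by $\delta+\delta^{-1}$ is also somewhat cleaner than the paper's induction with $a^n\pm b^n$, since the recursion is purely combinatorial and independent of $f(x)$. In short: same underlying idea, but your version fills gaps the paper leaves to the reader.
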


\begin{Remark}
In the terminology of \cite[Definition 3]{FutOvs2010}, (i) and (iii) say that $U$ is a \emph{Galois ring} with respect to $\C[x^2]$.
\end{Remark}

\begin{proof}
(i) This follows from the fact that $\tau$ interchanges $a$ and $b$ while sending $h$ to $-h$.

(ii) Let $U'=\C(x^2)U$. By definition of $\varphi$, $U'$ contains the elements $a+b$ and $(a-b)h$.
Hence $U'$ also contains $(a+b)^2 = a^2+ab+ba+b^2$. Since $ab$ and $ba$ are both rational functions fixed by $\tau$ it is easy to see that they belong to $\C(h^2)$, which shows that $U'$ contains $a^2+b^2$. Similarly one shows that $a^n+b^n$ and $(a^n-b^n)h$ belong to $U'$ for all integers $n>0$.

(iii)
Put
 \[x_n=\begin{cases}a^n,&n>0\\ 1,&n=0\\b^{|n|},&n<0\end{cases}\]
and let $\sum_n x_nf_n(h)$ be an arbitrary element in the invariant subring $\mathcal{K}$. Since $\tau(h)=-h$, and $\tau(x_n)=x_{-n}$, clearly
\begin{equation}\label{eq:fn1}
f_n(-h)=f_{-n}(h).
\end{equation}
Now write
\begin{equation}
f_n(h) = g_n^+(h^2) + g_n^-(h^2)h.
\end{equation}
By \eqref{eq:fn1} we have get
\begin{equation}
g_n^+(h^2) - g_n^-(h^2)h = g_{-n}^+(h^2) + g_{-n}^-(h^2)h 
\end{equation}
hence
\begin{equation}
g_n^\pm(h^2)=\pm g_{-n}^\pm(h^2)
\end{equation}
which implies that $g_0^-(h^2)=0$ and
\begin{equation}
\sum_n x_n f_n(h) = g_0^+(h^2) + \frac{1}{2}\sum_{n>0}  (x_n+x_{-n})g_n^+(h^2)  + \frac{1}{2}\sum_{n>0}  (x_n-x_{-n}) \cdot h g_n^-(h^2)
\end{equation}
Recalling that $x_n+x_{-n} = a^n+b^n$, the claim follows by the previous part.
\end{proof}

We are now ready to prove the first main result.

\begin{proof}[{Proof of Theorem \ref{thm:main1}.}]
Define $\varphi=\psi\circ\beta$. Using Theorem \ref{thm:beta}, Lemma \ref{lem:psi} we have
\begin{align}
\varphi(u)&=x^2,\\
\varphi(v) &= \frac{1}{2}\Big(\frac{q(x)}{x(\tfrac{1}{2}+x)}\delta^{-1}+\frac{q(-x)}{(-x)(\tfrac{1}{2}-x)}\delta\Big)+\frac{q(-\tfrac{1}{2})}{(\tfrac{1}{2}+x)(\tfrac{1}{2}-x)},\\ 
\varphi(w) &=\frac{1}{2}\Big(\frac{q(x)(-1-x)}{x(\tfrac{1}{2}+x)}\delta^{-1}+\frac{q(-x)(x+1)}{(-x)(\tfrac{1}{2}-x)}\delta\Big) +\frac{q(-\tfrac{1}{2})(-\tfrac{1}{2})}{(\tfrac{1}{2}+x)(\tfrac{1}{2}-x)}.
\end{align}
By Lemma \ref{lem:psibeta}(a) the image is contained in $\big(\C(x)\#\Z\big)^{S_2}$. Using partial fraction decomposition, direct computations prove (i). Lemma \ref{lem:psibeta}(c) implies (ii). Lastly, for any even polynomial $p(x)$, letting $.$ stand for the action of $L\#\Z$ on $L$,
\[\big(q(x)\delta^{-1}-q(-x)\delta\big).p(x)=q(x)p(x+1)-q(-x)p(x-1)\]
is an odd polynomial, hence is divisible by $2x$. Similarly,
\[\big(q(\pm x)\delta^{\mp 1}-q(-\tfrac{1}{2})\big).p(x)=q(\pm x)p(x\pm 1)-q(-\tfrac{1}{2})p(x)\]
has a zero at $x=\mp \tfrac{1}{2}$ hence is divisible by $1\pm 2x$. This proves (iii).
\end{proof}

By \cite{Har2020}, an immediate corollary is:
\begin{Corollary} \label{lem:maxcomm}
$\C[u]$ is maximal commutative in $D(q)$.
\end{Corollary}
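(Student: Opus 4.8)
The plan is to read this off directly from the realization of $D(q)$ as a principal Galois order obtained in Theorem~\ref{thm:main1}, together with the general fact -- established in \cite{Har2020}, and already for Galois rings in the theory of Futorny and Ovsienko -- that the invariant subalgebra $\La^W$ of a principal Galois order is automatically a maximal commutative subalgebra. Since parts (i)--(iii) of Theorem~\ref{thm:main1} exhibit $\varphi\big(D(q)\big)$ as a principal Galois order inside $\big(\C(x)\#\Z\big)^{S_2}$ whose invariant subalgebra is $\varphi(\C[u])=\C[x^2]=\La^W$, and since $\varphi$ is injective, the maximal commutativity of $\C[x^2]$ inside $\varphi\big(D(q)\big)$ transports back to the maximal commutativity of $\C[u]$ inside $D(q)$. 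This is exactly why one may simply invoke \cite{Har2020}.

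For the reader's convenience I would also spell out the short self-contained argument that underlies the cited result in our concrete setting. Let $Y\in D(q)$ centralize $\C[u]$; since $\C[u]$ is generated by $u$, it is enough to assume $[u,Y]=0$ and deduce $Y\in\C[u]$. Put $X=\varphi(Y)$ and write $X=\sum_{k\in\Z}f_k(x)\delta^k$ with $f_k\in\C(x)$. From $X\cdot\varphi(u)=\varphi(u)\cdot X$, i.e.\ $X x^2=x^2 X$ in $\C(x)\#\Z$, and $\delta^k x^2=(x-k)^2\delta^k$, comparing coefficients of $\delta^k$ gives $f_k(x)\big((x-k)^2-x^2\big)=0$, hence $f_k=0$ for every $k\ne 0$. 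Thus $X=f_0(x)\in\C(x)$, and $S_2$-invariance forces $f_0(-x)=f_0(x)$, i.e.\ $f_0\in\C(x^2)$. Finally, by Theorem~\ref{thm:main1}(iii) the operator $X$ preserves $\C[x^2]$, so $f_0=X\cdot 1\in\C[x^2]=\varphi(\C[u])$; injectivity of $\varphi$ then yields $Y\in\C[u]$.

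The one step I expect to be at all delicate -- and it is mild -- is the passage from ``$f_0$ is a $W$-invariant rational function'' to ``$f_0$ is a $W$-invariant polynomial''. Without the preservation property of Theorem~\ref{thm:main1}(iii) one only concludes that the centralizer of $\C[u]$ lies in $D(q)\cap\C(x^2)$, the intersection of $D(q)$ with the fraction field of $\C[u]$. To close the gap one uses either part (iii), as above, or Proposition~\ref{prp:Dq-properties}(iv): writing $f_0=g(x^2)/h(x^2)$ with $h\ne 0$ gives $h(u)Y=g(u)$ in $D(q)$, hence $Y\cdot h(u)\in\C[u]$ (using $[u,Y]=0$), and (iv) forces $Y\in\C[u]$. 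Either route completes the argument.
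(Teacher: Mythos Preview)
Your proposal is correct and follows exactly the paper's approach: the paper simply states that the corollary is immediate from \cite{Har2020}, which is precisely your first paragraph. The additional self-contained argument you provide is a welcome elaboration, but it goes beyond what the paper itself records.
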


\subsection{Gelfand-Kirillov Conjecture}

Let $\Ga$ be a finite subgroup of $\SL_2(\C)$ of type $D$. Let $A_1(\C)$ be the first Weyl algebra over $\C$. Then $\Ga$ acts naturally by algebra automorphisms of $A_1(\C)$. The corresponding subalgebra $(A_1)^\Ga$ of $\Ga$-invariants is a special case of a noncommutative type $D$ Kleinian singularity \cite{CraHol1998}. Thus the following corollary generalizes \cite{FutSch2018a} in the type $D$ case.

\begin{Corollary}
$D(q)$ satisfies the Gelfand-Kirillov conjecture. Explicitly we have
\begin{equation}
\Frac D(q) \cong \Frac A_1(\C).
\end{equation}
\end{Corollary}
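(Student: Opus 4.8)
The plan is to use the localization statement of Theorem \ref{thm:main1}(ii) to replace $D(q)$ by a skew field, and then recognize that skew field as $\Frac A_1(\C)$. Identify $D(q)$ with its image under $\varphi$. Since $D(q)$ is a noetherian domain (its associated graded ring is a commutative Kleinian coordinate ring, see the Remark following the definition of $D(q)$), the division ring $\Frac D(q)$ exists; and because $\C[x^2]=\C[u]\subseteq D(q)$ while $(\C(x)\#\Z)^{S_2}=\C(x^2)\,D(q)$ by Theorem \ref{thm:main1}(ii), we obtain a chain $D(q)\subseteq(\C(x)\#\Z)^{S_2}\subseteq\Frac D(q)$, whence $\Frac D(q)=\Frac\big((\C(x)\#\Z)^{S_2}\big)$. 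Put $E:=\Frac(\C(x)\#\Z)$. As $S_2=\langle\tau\rangle$ acts faithfully on $\C(x)\#\Z$, the standard identity $\Frac(R^{S_2})=(\Frac R)^{S_2}$ for an Ore domain $R$ (for a two-element group this is elementary: given $a\in(\Frac R)^{S_2}$, the nonzero $\tau$-stable left ideal $\{r\in R:ra\in R\}$ contains a nonzero $\tau$-fixed element, obtained from any $s$ in it by passing to $s+\tau(s)$ or to $s^2$, and multiplying $a$ on the left by it lands in $R^{S_2}$) gives $\Frac D(q)=E^{S_2}$. It therefore suffices to prove $E^{S_2}\cong\Frac A_1(\C)$.

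The second step is to exhibit an explicit Weyl pair inside $E^{S_2}$. Set
\[ Q=\delta+\delta^{-1},\qquad P=x(\delta-\delta^{-1})^{-1}\in E \]
(legitimate, since $\delta-\delta^{-1}\ne 0$ in the domain $\C(x)\#\Z$). Recalling $\tau(x)=-x$ and $\tau(\delta)=\delta^{-1}$, both $P$ and $Q$ are $\tau$-invariant, so $P,Q\in E^{S_2}$. From $[x,\delta^{\pm1}]=\pm\delta^{\pm1}$ one computes $[x,Q]=\delta-\delta^{-1}$, and since $\delta-\delta^{-1}$ commutes with the function $Q$ of $\delta$ this gives $[P,Q]=[x,Q]\cdot(\delta-\delta^{-1})^{-1}=1$. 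Hence $P$ and $Q$ generate a copy of $A_1(\C)$ (the natural surjection $A_1(\C)\to\C\langle P,Q\rangle$ is an isomorphism because $A_1(\C)$ is simple), and the division subring $D'$ of $E$ generated by $P,Q$ is isomorphic to $\Frac A_1(\C)$.

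It remains to show $D'=E^{S_2}$; this is a dimension count. On one side, since $\tau$ acts faithfully on $E$ and the base field has characteristic zero, the $\pm1$-eigenspace decomposition gives $E=E^{S_2}\oplus E^{S_2}x$, so $[E:E^{S_2}]=2$ as left $E^{S_2}$-spaces. On the other side I claim $[E:D']\le 2$. First, a short computation with the relations above yields $x^2\in D'$ — explicitly $x^2=P^2(Q^2-4)-QP-1$ — and $xQx^{-1}=Q+P^{-1}\in D'$ together with $xPx^{-1}\in D'$; since $P,Q$ generate $D'$ as a division ring, conjugation by $x$ carries $D'$ into $D'$, and using $x^2\in D'$ so does conjugation by $x^{-1}$, so conjugation by $x$ restricts to an automorphism of $D'$. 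Consequently every word in $D'$ and $x^{\pm1}$ reduces (pushing all $D'$-factors to the left, and collapsing even powers of $x$ via $x^2\in D'$) to an element of $D'+D'x$; this ring is a domain which is finite-dimensional over the division ring $D'$, hence is itself a division ring, so it equals the division ring generated by $D'$ and $x$ — which is all of $E$, as it contains $x$, $\delta=\tfrac{1}{2}(Q+P^{-1}x)$, and hence $\C(x)$ and $\delta^{\pm1}$. Thus $[E:D']\le 2$. Together with $D'\subseteq E^{S_2}\subseteq E$ and $[E:E^{S_2}]=2$ this forces $E^{S_2}=D'$, so $\Frac D(q)=E^{S_2}=D'\cong\Frac A_1(\C)$.

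The step I expect to require the most care — indeed the only one that is not purely formal — is establishing the reverse inclusion $E^{S_2}\subseteq D'$: one must produce the correct $\tau$-invariant elements (the pair $P,Q$ above) and then verify by hand that $x^2$ and the conjugates $xPx^{-1}$, $xQx^{-1}$ fall back into $D'$, which is precisely what makes the degree identity $[E:D']=[E:E^{S_2}]=2$ close up.
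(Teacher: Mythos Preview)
Your proof is correct and takes a genuinely different route from the paper's. After the common opening step $\Frac D(q)\cong\big(\Frac(\C(x)\#\Z)\big)^{S_2}=:E^{S_2}$, the paper proceeds by invoking general machinery: it identifies $E$ with $\Frac A_1(\C)$, rewrites $E^{S_2}$ as $\Frac\big(A_1(\C)^{S_2}\big)$, and then appeals to the solution of the noncommutative Noether problem for $S_2$ from \cite{FutSch2019} (resting in turn on the commutative Noether problem $\C(x)^{S_2}\cong\C(x)$) to conclude. Your argument is instead fully explicit: you produce a concrete Weyl pair $P=x(\delta-\delta^{-1})^{-1}$, $Q=\delta+\delta^{-1}$ inside $E^{S_2}$ and then close up via a degree-$2$ extension count, using the verifications $x^2=P^2(Q^2-4)-QP-1\in D'$ and $xD'x^{-1}=D'$ to force $[E:D']\le 2=[E:E^{S_2}]$. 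The paper's approach is shorter and situates the result within a general framework; yours is more elementary and self-contained, avoids the external citation, and yields an explicit isomorphism---in particular it hands you concrete Weyl-algebra generators for $\Frac D(q)$, which may be of independent interest.
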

\begin{proof} The technique here is well-known. Following for example \cite{FutOvs2014}, we have
\begin{align*}
\Frac D(q) &\cong \Frac\big( (\C(x)\# \Z)^{S_2}\big) \\
&\cong \big(\Frac (\C(x)\# \Z)\big)^{S_2} \\
&\cong \big(\Frac A_1(\C)\big)^{S_2} \\
&\cong \Frac\big( A_1(\C)^{S_2}\big).
\end{align*}
Since $\C(x)^{S_2}=\C(x^2)\cong \C(x)$, $S_2$ solves the commutative Noether problem, hence, by for example \cite{FutSch2019},
$\Frac\big( A_1(\C)^{S_2}\big) \cong \Frac A_1(\C)$.
\end{proof}

\begin{Remark}
This result was proved in all types ADE by different methods in \cite{Cra1999}.
\end{Remark}

\section{The Principal Flag Order Associated to \texorpdfstring{$D(q)$}{D(q)}}

We use the obtained Galois order realization of $D(q)$ to prove it is Morita equivalent to an explicit subalgebra of the nil-Hecke algebra of type $A_1^{(1)}$.

As before, consider the skew group algebra (smash product) $\C(x)\#\Z\rtimes S_2$ of the field $\C(x)$ of rational functions with the group $\Z\rtimes S_2$ (Weyl group of type $A_1^{(1)}$). Writing $\Z$ multiplicatively with generator $\delta$, and $S_2=\{(1),\tau\}$, the relations are
\begin{equation}
\tau\cdot f(x)=f(-x)\cdot \tau,\qquad \delta\cdot f(x)=f(x-1)\cdot\delta,\qquad \tau\delta\tau^{-1}=\delta^{-1},\qquad \tau^2=1.
\end{equation}
The nil-Hecke algebra $N$ of type $A_1^{(1)}$ can be defined as the subalgebra of $\C(x)\#\Z\rtimes S_2$ generated by the three elements
\[x,\quad s_1=\frac{1}{2x}(1-\tau),\quad s_0=\frac{1}{2x+1}(1-\delta^{-1}\tau).\]
The natural action of $L\#\Z\rtimes S_2$ on $L$ restricts to a faithful action of $N$ on $\C[x]$ by divided difference operators:
\begin{equation}
s_0(p(x))=\frac{p(x)-p(-1-x)}{x-(-1-x)},\qquad
s_1(p(x))=\frac{p(x)-p(-x)}{x-(-x)}.
\end{equation}
In fact, the image of the homomorphism $N\to\End_\C(\C[x])$ equals $\End_{\C[x^2]}(\C[x])$, see \cite{Kum2012}.
The two elements $s_0$ and $s_1$ satisfy
\begin{equation}
s_0^2=0,\quad s_1^2=0,\quad s_0s_1s_0s_1=s_1s_0s_1s_0.
\end{equation}
Let $e=\frac{1}{2}(1+\tau)$ be the symmetrizing idempotent for $S_2$. Fix a polynomial $q(t)$, $\deg q(t)\ge 4$.
Let $U=U(q)=\phi\big(D(q)\big)$, where $\phi$ is the injective algebra map from Theorem \ref{thm:main1}. Let $F(q)$ be the subalgebra of $\C(x)\#\Z\rtimes S_2$ generated by the subset
\begin{equation}
eU(q)e \,\cup\, \{s_1,\, x\}.
\end{equation}
By \cite{Web2019}, $F(q)$ is a principal flag order and is furthermore Morita equivalent to $U(q)$ (and hence to $D(q)$). The most important identity in this context, which establishes that $\End_{\C[x^2]}(\C[x])$ is Morita equivalent to $\C[x^2]$ reads
\begin{equation}\label{eq:nilHecke}
e+xe\frac{1}{x} = 1.
\end{equation}
Other useful relations include
\begin{equation}
s_1x+xs_1=1,\qquad s_1x-xs_1=\tau,\qquad s_1x=e, \qquad s_1=es_1.
\end{equation}

\begin{Lemma} \label{lem:F-generators}
Put $a=\phi(-\frac{1}{2}v-w)$ and $b=\phi(-v-w)$. The following statements hold:
\begin{enumerate}[{\rm (a)}] 
\item $F(q)=\C\langle eae,\, ebe,\, s_1, x\rangle$,
\item $q(x)\delta^{-1}\in F(q)$ and moreover $F(q)=\C\langle q(x)\delta^{-1},\, ebe,\, s_1,\, x\rangle$,
\item $D:=\frac{1}{\frac{1}{2}+x}\big(q(x)\delta^{-1}-q(-1/2)\tau\big)\in F(q)$ and moreover $F(q)=\C\langle q(x)\delta^{-1},\, D,\, s_1,\, x\rangle$,
\item $[D,x^2]=2q(x)\delta^{-1}$, so $F(q)=\C\langle D,\, s_1,\, x\rangle$,
\item $q(x)s_0 = -\frac{1}{2}D\tau-\frac{q(-1/2)-q(x)}{2x+1}$ and therefore
\begin{equation}
F(q)=\C\langle q(x)s_0,\, s_1,\, x\rangle.
\end{equation}
\end{enumerate}
\end{Lemma}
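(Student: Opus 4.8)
The plan is to establish the five statements in order, since each relies on the previous one, by working inside the ambient smash product $\C(x)\#\Z\rtimes S_2$ and using the explicit formulas from Theorem \ref{thm:main1} together with the nil-Hecke identities \eqref{eq:nilHecke} and the relations $s_1x+xs_1=1$, $s_1x=e$, $s_1=es_1$, $e+xe\frac1x=1$. First, for (a): by definition $F(q)=\C\langle eU(q)e,\,s_1,\,x\rangle$, and $U(q)=\phi(D(q))$ is generated as a $\C$-algebra by $\phi(u)=x^2$, $a=\phi(-\tfrac12 v-w)$ and $b=\phi(-v-w)$ (an invertible affine change of the original generators $v,w$, using that $\phi(u)=x^2$). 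Then I would argue $e x^2 e = x^2 e$ lies in the subalgebra generated by $x$ and $s_1$ (since $e=s_1x$), and more generally that conjugating/multiplying the generators of $eU(q)e$ by $x$ and $s_1$ reduces everything to $\C\langle eae,\,ebe,\,s_1,x\rangle$; conversely these three elements clearly lie in $F(q)$. The mechanism is that $e$ together with $x,\tfrac1x$ via \eqref{eq:nilHecke} lets one move the $e$'s around and peel off polynomial coefficients.

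For (b), I would compute $ebe$ explicitly from \eqref{eq:phi-v-w}: writing $\phi(-v-w)=\tfrac12\big(c(x)\delta^{-1}+c(-x)\delta\big)-d(x)$ with $c(x)=q(x)/(\tfrac12+x)$ and $d(x)$ the even rational function appearing there, one finds $e\cdot(c(x)\delta^{-1})\cdot e$ and $e\cdot(c(-x)\delta)\cdot e$ are related by $\tau$-symmetry, so $ebe$ essentially encodes $c(x)\delta^{-1}$ modulo terms already in $\C[x]e$ and $e\C[x]$; combining with $x$ one extracts $c(x)\delta^{-1}$, and then multiplying by $\tfrac12+x\in\C[x]$ yields $q(x)\delta^{-1}\in F(q)$. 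The reverse inclusion $F(q)=\C\langle q(x)\delta^{-1},ebe,s_1,x\rangle$ follows since $q(x)\delta^{-1}$ and $x$ recover $eae$ and $ebe$ up to the polynomial coefficients. For (c), the element $D=\frac{1}{\tfrac12+x}\big(q(x)\delta^{-1}-q(-\tfrac12)\tau\big)$ should be matched against $ebe$ and $q(x)\delta^{-1}$: expanding $ebe$ and using $e=\tfrac12(1+\tau)$ one sees the $\tau$-part of $ebe$ produces exactly the $q(-\tfrac12)\tau$ correction, so $D$ is a $\C[x]$-linear combination of $ebe$, $q(x)\delta^{-1}$ and $s_1$; conversely $ebe$ is recovered from $D$, $q(x)\delta^{-1}$, $s_1$, $x$.

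For (d), I would just compute the commutator directly in $\C(x)\#\Z\rtimes S_2$: since $\delta^{-1}x^2=(x+1)^2\delta^{-1}$ and $\tau x^2=x^2\tau$, we get $[q(x)\delta^{-1}-q(-\tfrac12)\tau,\,x^2]=q(x)\big((x+1)^2-x^2\big)\delta^{-1}=q(x)(2x+1)\delta^{-1}$, and dividing by $\tfrac12+x$ gives $[D,x^2]=2q(x)\delta^{-1}$; hence $q(x)\delta^{-1}\in\C\langle D,x\rangle$, collapsing the generating set to $\C\langle D,s_1,x\rangle$. Finally for (e), I would verify the stated identity $q(x)s_0=-\tfrac12 D\tau-\frac{q(-\tfrac12)-q(x)}{2x+1}$ by expanding $D\tau=\frac{1}{\tfrac12+x}\big(q(x)\delta^{-1}\tau-q(-\tfrac12)\big)$ and comparing with $q(x)s_0=\frac{q(x)}{2x+1}(1-\delta^{-1}\tau)$, using $\frac{1}{\tfrac12+x}=\frac{2}{2x+1}$; the polynomial term $\frac{q(-\tfrac12)-q(x)}{2x+1}\in\C[x]$ absorbs the discrepancy. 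This shows $q(x)s_0\in F(q)$, and conversely $D$ is recovered from $q(x)s_0$, $\tau=s_1x-xs_1$, and $x$, giving $F(q)=\C\langle q(x)s_0,s_1,x\rangle$.

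I expect the main obstacle to be the bookkeeping in parts (a)--(c): carefully tracking which elements live in $\C[x]e$, $e\C[x]$, and how multiplication by $x$ and $\tfrac1x$ together with $e=s_1x$ and identity \eqref{eq:nilHecke} lets one both strip polynomial coefficients from $\delta^{\pm1}$-terms and pass between $eXe$ and $X$. Once $q(x)\delta^{-1}$ is available as a bona fide element of $F(q)$, parts (d) and (e) are short explicit computations in the smash product. A minor point to be careful about throughout is that $q(-\tfrac12)=\tfrac12\rho$ may vanish, but none of the reductions divide by it, so the argument is uniform in $q$.
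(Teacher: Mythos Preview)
Your overall plan is sound, and parts (d) and (e) are correct direct computations in the smash product. The gap is in your treatment of (b), and consequently (c). You propose to compute $ebe$ and ``extract $c(x)\delta^{-1}$'' where $c(x)=q(x)/(\tfrac12+x)$, then left-multiply by $\tfrac12+x$. But you never explain how this extraction works, and there is no evident reason $c(x)\delta^{-1}$ itself should lie in $F(q)$; passing from $ebe$ (which carries $e$ on both sides) to a bare $\delta^{-1}$-term is exactly the nontrivial step, and ``combining with $x$'' does not accomplish it. The phrase ``$ebe$ essentially encodes $c(x)\delta^{-1}$ modulo terms already in $\C[x]e$ and $e\C[x]$'' is not a usable statement, since the constant term $d(x)e$ in $ebe$ is a genuinely rational (not polynomial) coefficient.

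The paper's argument for (b) runs in the opposite direction: rather than starting from a known generator, it starts from the target $q(x)\delta^{-1}$ and sandwiches it by $1=e+xe\tfrac1x$ on both sides, producing four terms of the shape $x^?\, e\,(\cdots)\,e\,\tfrac1{x^?}$. Each term is then matched with an element already in $F(q)$, and the crucial input is $a$, not $b$: since $a=\tfrac12\big(\tfrac{q(x)}{x}\delta^{-1}+\tfrac{q(-x)}{-x}\delta\big)$ has $1/x$ built into its coefficients, one of the four terms is exactly $x\cdot eae$, while the others are handled via the commutator identity $[a,x^2]=q(x)\delta^{-1}+q(-x)\delta$ and its iterate. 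The reverse inclusion is the one-line check $eae=s_1\,q(x)\delta^{-1}\,e$. Part (c) reuses the same sandwich technique, applied to $\tfrac{x}{\frac12+x}\big(q(x)\delta^{-1}-q(-\tfrac12)\tau\big)$, and it is \emph{here} that $ebe$ enters: one of the four terms equals $x\cdot e\,\phi(-v-w)\,e$. So the roles of $a$ and $b$ in (b) and (c) are swapped relative to what you guessed, and the mechanism is a decomposition of the target element rather than an extraction from the known generators.
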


\begin{proof}
(a) This is immediate, since $U$ is generated by$\{a,b,x^2\}$.

(b) We have
\begin{align*}
q(x)\delta^{-1} &=(e+xe\frac{1}{x})q(x)\delta^{-1}(e+xe\frac{1}{x})\\
&=xe\frac{1}{2}\big(q(x)\delta^{-1}+q(-x)\delta\big)e\\
&+xe\frac{1}{2}\big(\frac{q(x)}{x}\delta^{-1}+\frac{q(-x)}{-x}\delta\big)e\\
&+e\frac{1}{2}\big(q(x)(x+1)\delta^{-1}+q(-x)(-x+1)\delta\big)e\frac{1}{x}\\
&+xe\frac{1}{2}\big(\frac{q(x)(x+1)}{x}\delta^{-1}+\frac{q(-x)(-x+1)}{-x}\delta\big)e\frac{1}{x}
\end{align*} 
We show that each of the four terms belong to $F(q)$.
The second term equals $xeae$ by \eqref{eq:phi-half-v-w}. A computation shows that $[a,x^2]=q(x)\delta^{-1}+q(-x)\delta$, so the first term equals $xe[a,\phi(u)]e\in F(q)$. Applying $[\cdot,x^2]$ once more one checks the third term is in $F(q)$, also using $e\frac{1}{x}=s=es$. The fourth term can be expressed in terms we already know belong to $F(q)$, using $\frac{x+1}{x}=1+\frac{1}{x}$. This proves that $q(x)\delta^{-1}\in F(q)$, and hence the $\supseteq$ inclusion. For the reverse inclusion, one checks that $eae=s_1q(x)\delta^{-1}e$.

(c) First note that since $x=\frac{1}{2}+x-\frac{1}{2}$, we have
\begin{equation}\label{eq:generator-lemma-1}
\frac{x}{\frac{1}{2}+x}\big(q(x)\delta^{-1}-q(-1/2)\tau\big) = q(x)\delta^{-1}-q(-1/2)\tau + \frac{-1/2}{\frac{1}{2}+x}\big(q(x)\delta^{-1}-q(-1/2)\tau\big)
\end{equation}
By (b) and the fact that $\tau\in F(q)$, to prove (c) it suffices to show the left hand side of 
\eqref{eq:generator-lemma-1} belongs to $F(q)$. We have by \eqref{eq:nilHecke}:
\begin{align*}
\frac{x}{\frac{1}{2}+x}\big(q(x)\delta^{-1}-q(-1/2)\tau\big) &= (e+xe\frac{1}{x}) 
\frac{x}{\frac{1}{2}+x}\big(q(x)\delta^{-1}-q(-1/2)\tau\big) (e+xe\frac{1}{x}) \\
&= e\frac{x}{\frac{1}{2}+x}\big(q(x)\delta^{-1}-q(-1/2)\tau\big) e \\
&+ xe\frac{1}{\frac{1}{2}+x}\big(q(x)\delta^{-1}-q(-1/2)\tau\big) e \\
&+e\frac{x}{\frac{1}{2}+x}\big(q(x)(x+1)\delta^{-1}+xq(-1/2)\tau\big) e\frac{1}{x}\\
&+xe\frac{1}{\frac{1}{2}+x}\big(q(x)(x+1)\delta^{-1}+xq(-1/2)\tau\big) e\frac{1}{x}.
\end{align*}
We prove that each of these four term belongs to $F(q)$.
The second term belongs to $F(q)$ since it is equal to $x\cdot e \phi(-v-w)e\in F(q)$ (the $\tau$ can be absorbed into $e$, and the expression between the $e$'s can be symmetrized). By the division algorithm $\frac{x}{\frac{1}{2}+x} = 1-\frac{1/2}{\frac{1}{2}+x}$, the first term belongs to $F(q)$ as well, using part (b) and that $e,\tau\in F(q)$. To prove the fourth term is in $F(q)$, we use
\[\frac{1}{\frac{1}{2}+x}\big(q(x)(x+1)\delta^{-1}+xq(-1/2)\tau\big)=2q(x)\delta^{-1}-\frac{x}{\frac{1}{2}+x}\big(q(x)\delta^{-1}-q(-1/2)\tau\big)\]
along with part (b), that $e\frac{1}{x}=s_1=es_1$, $s_1\in F(q)$, and that we already proved the first term belongs to $F(q)$. Lastly, the third term belongs to $F(q)$ using the division algorithm similarly to how we proved the first term is in $F(q)$. We have shown $D\in F(q)$, proving the inclusion $\supseteq$. For the reverse, we have $ebe=eDe$ by direct calculation.

(d) and (e) follow from direct computation.
\end{proof}

We can now prove the second main theorem from the introduction.

\begin{proof}[Proof of Theorem \ref{thm:main2}]
By Lemma \ref{lem:F-generators}, the principal flag order $F(q)$ is generated by $\{q(x)s_0,\,s_1,\,x\}$. As already stated, by general results on flag orders due to Webster \cite{Web2019}, $F(q)$ is Morita equivalent to $\phi(D(q))$, hence to $D(q)$ since $\phi$ is an injective algebra map.
\end{proof}

\section{Harish-Chandra Modules over \texorpdfstring{$D(q)$}{D(q)}}

\begin{Definition}[{\cite{DroFutOvs1994}}]
A commutative subalgebra $\Ga$ of an algebra $A$ is a \emph{Harish-Chandra subalgebra} if $\Ga a\Ga$ is finitely generated as a left and right $\Gamma$-module for any $a\in A$.
\end{Definition}

\begin{Definition}[{\cite{DroFutOvs1994}}]
Let $\Ga$ be a Harish-Chandra subalgebra of an algebra $U$. A finitely generated $U$-module $V$ is a \emph{Harish-Chandra module} if $\Ga v$ is finite-dimensional for every $v\in V$. When $\Ga$ is noetherian, this is equivalent to that $V$ has a decomposition
\[
V=\bigoplus_{\Fm\in\Specm(\Ga)} V_\Fm,\qquad V_\Fm=\{v\in V\mid \Fm^k v=0,\; k\gg 0\}.
\]
\end{Definition}

In particular, specializing to the case of $A=D(q)$ and $\Ga=\C[u]$:

\begin{Definition}
A finitely generated $D(q)$-module $V$ is a \emph{Harish-Chandra module} if
\[
V=\bigoplus_{\chi\in\C} V_\chi,\qquad V_\chi=\{a\in V\mid (u-\chi)^k a=0, k\gg 0\}.
\]
\end{Definition}

\subsection{Dichotomy Theorem for Simple \texorpdfstring{$D(q)$}{D(q)}-Modules}

A module $V$ over a commutative integral domain $\Ga$ is \emph{$\Ga$-torsionfree} if for for every nonzero $v\in V$ the map $\Ga\to V$, $\ga\mapsto \ga.v$ is injective.

The following theorem generalizes the Dichotomy Theorem for simple modules over generalized Weyl algebras over Dedekind domains \cite{BavVan1997}. The idea is a special case of general principles of prime height stratification, see for example \cite{FutOvsSao2011}.

\begin{Theorem}\label{thm:dichotomy}
Let $\Ga$ be a commutative Harish-Chandra subalgebra of an algebra $U$. Suppose $\Ga$ is a noetherian integral domain of Krull dimension one. Then any simple $U$-module is either $\Ga$-torsionfree or a Harish-Chandra module with respect to $\Ga$.
\end{Theorem}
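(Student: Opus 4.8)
The plan is to split a simple module into a $\Ga$-torsion part and a $\Ga$-torsionfree part and let simplicity do the rest. Let $V$ be a simple $U$-module and set
\[
T=\{v\in V\mid \Ann_\Ga(v)\neq 0\}.
\]
Since $\Ga$ is an integral domain, $T$ is a $\Ga$-submodule of $V$: it is visibly $\Ga$-stable, and if $\ga_1v_1=\ga_2v_2=0$ with $\ga_1,\ga_2\in\Ga\setminus\{0\}$ then $\ga_1\ga_2(v_1+v_2)=0$ with $\ga_1\ga_2\neq 0$. The crux is to upgrade this to the statement that $T$ is a $U$-submodule. Granting that, simplicity gives $T=0$ or $T=V$: in the first case $V$ is $\Ga$-torsionfree by definition, and in the second case I will check that $V$ is a Harish-Chandra module.

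For the $U$-stability I would fix $a\in U$ and $v\in T$ and put $I=\Ann_\Ga(v)\neq 0$. The key input is that $\Ga$ is a Harish-Chandra subalgebra, so $\Ga a\Ga$ is finitely generated as a left (and right) $\Ga$-module. The map $\Ga a\Ga\to V$, $\xi\mapsto \xi v$, is left $\Ga$-linear, its image is the left $\Ga$-submodule $\Ga a\Ga v$ of $V$, which contains $av$, and it annihilates the sub-bimodule $(\Ga a\Ga)I$ because $(\Ga a\Ga)(Iv)=0$. Hence $\Ga a\Ga v$ is a $\Ga$-linear image of $\Ga a\Ga/(\Ga a\Ga)I$, which is finitely generated over $\Ga/I$. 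Now the Krull-dimension-one hypothesis enters: since $I\neq 0$, every prime of $\Ga$ containing $I$ is maximal, so $\Ga/I$ is a noetherian ring of Krull dimension zero, hence Artinian; therefore $\Ga a\Ga/(\Ga a\Ga)I$ has finite length over $\Ga$, and so does its image $\Ga a\Ga v$. In particular $\Ann_\Ga(av)\supseteq \Ann_\Ga(\Ga a\Ga v)\neq 0$, so $av\in T$, giving $UT\subseteq T$.

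It then remains to treat the case $T=V$. Picking any nonzero $v_0\in V$, simplicity gives $V=Uv_0$, so $V$ is cyclic, hence finitely generated, over $U$. For an arbitrary $v=av_0\in V$, the computation above (with $v_0$ in place of $v$) shows $\Ga v\subseteq \Ga a\Ga v_0$ has finite length as a $\Ga$-module, so $\Ga/\Ann_\Ga(v)$ is Artinian and $\Ann_\Ga(v)$ contains a product $\Fm_1^{e_1}\cdots\Fm_r^{e_r}$ of powers of maximal ideals. The Chinese Remainder Theorem then places $v$ in $V_{\Fm_1}\oplus\cdots\oplus V_{\Fm_r}$, whence $V=\sum_{\Fm}V_{\Fm}=\bigoplus_{\Fm}V_{\Fm}$; combined with cyclicity over $U$ this says $V$ is a Harish-Chandra module with respect to $\Ga$.

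The only genuinely nontrivial point is the $U$-stability of $T$, and this is exactly where the two hypotheses on $\Ga$ must be used together: the Harish-Chandra property to keep $\Ga a\Ga$, and everything built from it, finitely generated over $\Ga$, and the Krull-dimension-one condition to turn ``nonzero annihilator'' into ``finite length'' and back. Everything after that step is a formal consequence of simplicity together with the noetherian characterization of Harish-Chandra modules recalled above; in the applications $\Ga$ is an affine $\C$-algebra, so ``finite length over $\Ga$'' there just means ``finite-dimensional over $\C$'', matching the definition used for $D(q)$ and $F(q)$.
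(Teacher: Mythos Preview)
Your argument is correct and follows the paper's strategy: exhibit the $\Ga$-torsion part of $V$ as a $U$-submodule and invoke simplicity. Where the paper first uses associated primes to see that $\bigoplus_\Fm V_\Fm$ is nonzero and then cites \cite[Prop.~14]{DroFutOvs1994} for its $U$-stability, you instead prove $U$-stability of the torsion submodule $T$ directly and recover the weight decomposition afterwards---in effect supplying an inline proof of the cited result. One small wrinkle worth making explicit: the claim that $\Ga a\Ga/(\Ga a\Ga)I$ is finitely generated over $\Ga/I$ uses the \emph{right} $\Ga$-structure (you modded out by a right ideal), whereas the surjection onto $\Ga a\Ga v$ is \emph{left} $\Ga$-linear; the bridge from finite right length to finite left length is finite-dimensionality over the base field, exactly as you flag in your closing remark, and this is also the ambient hypothesis under which the paper (and \cite{DroFutOvs1994}) operate.
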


\begin{proof}
Let $V$ be a simple $U$-module. Consider the restriction of $V$ to $\Gamma$. For $v\in V$, consider the annihilator $\ann_\Ga(v)=\{\ga\in\Ga\mid \ga v=0\}$.
Suppose that $V$ is not $\Ga$-torsionfree. That is, suppose there exists a nonzero vector $v\in V$ and a nonzero element $\ga\in \Ga$ such that $\ga v=0$.
That means that there are nonzero ideals in the family
\[\{\ann_\Ga(v)\mid v\in V\setminus\{0\}\}\]
of proper ideals of $\Ga$. The maximal elements of this set are (associated) prime ideals. 
This proves that if $V$ is not $\Ga$-torsionfree then there exists a nonzero prime ideal $\mathfrak{p}$ of $\Ga$ such that $\mathfrak{p}v=0$ for some nonzero $v\in V$. 
Since $\Ga$ is an integral domain in which every nonzero prime is maximal, this proves that the direct sum of generalized $\Ga$-weight spaces
\[V'=\bigoplus_{\Fm\in\op{Specm(\Ga)}} V_\Fm,\qquad V_\Fm=\{v\in V\mid \Fm^n v=0,\, n\gg 0\}.\]
is a nonzero subspace of $V$. Since $\Ga$ is a Harish-Chandra subalgebra of $U$, the subspace $V'$ is a $U$-submodule of $V$ by \cite[Prop.~14]{DroFutOvs1994}. Since $V$ is simple, $V=V'$.
This proves that $V$ is either $\Ga$-torsionfree, or a Harish-Chandra module with respect to $\Ga$.
\end{proof}

\begin{Corollary}
Every simple $D(q)$-module is either a $\C[u]$-torsionfree module, or a Harish-Chandra module with respect to $\C[u]$.
\end{Corollary}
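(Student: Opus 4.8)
The plan is simply to invoke Theorem \ref{thm:dichotomy} with $U=D(q)$ and $\Ga=\C[u]$, after checking that this choice of $\Ga$ satisfies the two hypotheses of that theorem. So the whole argument reduces to two verifications already available in the text.

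First I would verify that $\C[u]$ is a noetherian integral domain of Krull dimension one. By Proposition \ref{prp:Dq-properties}(ii), $u$ is algebraically independent over $\C$, so $\C[u]$ is a polynomial ring in one variable over a field; such a ring is a principal ideal domain, in particular noetherian and an integral domain, and its Krull dimension is one. Second, I would recall that $\C[u]$ is a commutative Harish-Chandra subalgebra of $D(q)$, which is precisely Proposition \ref{prp:Dq-properties}(vi). With both hypotheses in place, Theorem \ref{thm:dichotomy} applies verbatim and yields the asserted dichotomy for simple $D(q)$-modules.

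I do not expect any real obstacle: the corollary is a formal specialization of Theorem \ref{thm:dichotomy}. The substantive content lies entirely in the two cited facts — the Harish-Chandra property of $\C[u]$, which itself rests on the PBW-type basis of Proposition \ref{prp:Dq-properties}(i) together with the commutation relations \eqref{eq:DqRel1}--\eqref{eq:DqRel2}, and the associated-prime (prime-height stratification) argument in the proof of Theorem \ref{thm:dichotomy}. Should one wish, the statement for $F(q)$ mentioned in the introduction follows in exactly the same way, using that $\C[x]$ is a polynomial ring in one variable and a Harish-Chandra subalgebra of $F(q)$.
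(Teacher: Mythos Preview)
Your proposal is correct and matches the paper's own proof essentially verbatim: the paper simply states that the corollary is immediate from Theorem \ref{thm:dichotomy} together with Proposition \ref{prp:Dq-properties}(ii),(vi). Your additional commentary on why $\C[u]$ has Krull dimension one is a harmless elaboration of what the paper leaves implicit.
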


\begin{proof}
Immediate by Theorem \ref{thm:dichotomy} and Proposition \ref{prp:Dq-properties}(ii),(vi).
\end{proof}{}

\subsection{Harish-Chandra Modules of Local Distributions}
\label{sec:reps}

The results in this section are analogous to computations for $U(\Fgl_n)$ in the setting of derivative tableaux from \cite{FutGraRam2016} and local distribution approach from \cite{Vis2018}.
Recall that $D(q)$ acts from the left on $\C[x^2]$ via $\varphi$ by Theorem \ref{thm:main1}. Thus we may use the anti-automorphism $\ast$ from Proposition \ref{prp:Dq-properties}(v) to equip the dual space $\Hom_\C(\C[x^2],\C)$ with the structure of a left $D(q)$-module. Explicitly, 
\begin{equation}
(X.\xi)\big(p(x)\big)=\xi\big(\varphi(X^\ast).p(x)\big),\qquad \forall X\in D(q),\; \xi\in\Hom_\C(\C[x^2],\C),\; p(x)\in\C[x^2].
\end{equation}

\begin{Definition}
For any $\la\in\C$ the \emph{tableaux} $T_0(\la)$ and \emph{derivative tableaux} $T_1(\la)$ are the elements of $\Hom_\C\big(\C[x^2],\C\big)$  defined by 
\begin{equation}
T_0(\la)\big(p(x)\big) = p(\la), \qquad T_1(\la)\big( p(x)\big) = p'(\la),\qquad\forall p(x)\in\C[x^2].
\end{equation}
\end{Definition}

\begin{Remark}
Note that for all $\la\in\C$,
\begin{equation}
T_0(-\la) = T_0(\la),\qquad T_1(-\la)=-T_1(\la).
\end{equation}
In particular, $T_1(0)=0$.
\end{Remark}

In analogy with Gelfand-Tsetlin patterns for $\Fgl_n$ or $\Fso_n$ where the top row determines the highest weight, hence a central quotient of the enveloping algebra, one could picture these tableaux as follows:
\begin{equation}
T_0(\la)=\,
\begin{tikzpicture}[scale=0.7,baseline=17pt,thick]
\draw (0,1)--(0,0)--(2,0)--(2,1);
\draw (1,1)--(1,0);
\draw (-.5,1)--(2.5,1)--(2.5,2)--(-.5,2)--cycle;
\node at (0.5,0.5) {$\lambda$};
\node at (1.5,0.5) {$-\lambda$};
\node at (1,1.5)   {$q(x)$};
\end{tikzpicture}
\qquad
T_1(\la)=\begin{tikzpicture}[scale=0.7,baseline=17pt,thick,dashed]
\draw (0,1)--(0,0)--(2,0)--(2,1);
\draw (1,1)--(1,0);
\draw (-.5,1)--(2.5,1)--(2.5,2)--(-.5,2)--cycle;
\node at (0.5,0.5) {$\lambda$};
\node at (1.5,0.5) {$-\lambda$};
\node at (1,1.5)   {$q(x)$};
\end{tikzpicture}
\end{equation}

For each coset $\omega\in\C/\Z$ consider the subspaces
\begin{equation}
M(\omega)=\Span_\C\big\{T_0(\la),T_1(\la)\mid \la\in\omega\big\}\subset\Hom_\C(\C[x^2],\C),
\end{equation} 
\begin{equation}
M_0(\omega)=\Span_\C\big\{T_0(\la)\mid \la\in\omega\big\}\subset\Hom_\C(\C[x^2],\C).
\end{equation}
The following result describes the action of the generators $\{u,\,\tfrac{1}{2}v+w,\,w\}$ of $D(q)$ on $M(\omega)$ in the tableaux basis. In particular it shows that $M(\omega)$ is a submodule of $\Hom_\C\big(\C[x^2],\C\big)$, and $M_0(\omega)$ is likewise a submodule, provided $\omega\neq \Z, \frac{1}{2}+\Z$. The formulas also provide a visual description of these modules, see Figures \ref{fig:generic} for the generic cases, Figure \ref{fig:integral} for the integral case, and Figure \ref{fig:half-integral} for the half-integral case.

\begin{Proposition}\label{prop:calculations}
\begin{subequations}
\begin{enumerate}[{\rm (a)}]
\item For any $\la\in\C$,
\begin{equation} \label{eq:u-action}
(u-\la^2).T_0(\la)=0,\qquad (u-\la^2).T_1(\la)=T_0(\la).
\end{equation}
\item If $\la\in\C\setminus\{0\}$, then
\begin{align}
(\tfrac{1}{2}v+w).T_0(\la)&=
\frac{q(\la)}{2\la}T_0(\la+1)-\frac{q(-\la)}{2\la}T_0(\la-1), \label{eq:v-action-T0} \\
(\tfrac{1}{2}v+w).T_1(\la)&=
\frac{q(\la)}{2\la}T_1(\la+1)+\frac{q(-\la)}{2\la}T_1(\la-1) \nonumber \\
&+\frac{q'(\la)-q(\la)}{2\la^2}T_0(\la+1)-\frac{q'(-\la)-q(-\la)}{2\la^2}T_0(\la-1). \label{eq:v-action-T1}
\end{align}
\item For $\la=0$ we have
\begin{equation} \label{eq:v-action-T0-singular}
(\tfrac{1}{2}v+w).T_0(0)=q(0)T_1(1)+q'(0)T_0(1).
\end{equation}
\item 
If $\la\in\C\setminus\big\{\tfrac{1}{2},\,-\tfrac{1}{2}\big\}$, then
\begin{align}
w.T_0(\la) &\in \frac{q(\la)}{1+2\la}T_0(\la+1)+\frac{q(-\la)}{1-2\la}T_0(\la-1)+ \C T_0(\la), \label{eq:w-action-T0}\\
w.T_1(\la) &\in 
\frac{q(\la)}{1+2\la}T_1(\la+1)+\frac{q(-\la)}{1-2\la}T_1(\la-1)
\nonumber\\
&+\frac{q'(\la)(\tfrac{1}{2}+\la)-q(\la)}{2(\tfrac{1}{2}+\la)^2}T_0(\la+1)+\frac{-q'(-\la)(\tfrac{1}{2}-\la)+q(-\la)}{2(\tfrac{1}{2}-\la)^2}T_0(\la-1) \nonumber \\
&+\C T_0(\la)+\C T_1(\la). \label{eq:w-action-T1}
\end{align}
\item For $\la=\tfrac{1}{2}$ we have
\begin{align}
w.T_0(\tfrac{1}{2})&\in q(-\tfrac{1}{2})T_1(\tfrac{1}{2})+\frac{1}{2}q(\tfrac{1}{2})T_0(\tfrac{3}{2})+\C T_0(\tfrac{1}{2}),
\label{eq:w-action-T0-singular}\\
w.T_1(\tfrac{1}{2})&\in \frac{1}{2}q(\tfrac{1}{2})T_1(\tfrac{3}{2})
+\frac{q'(\tfrac{1}{2})-q(\tfrac{1}{2})}{2}T_0(\tfrac{3}{2})+
\C T_0(\tfrac{1}{2})+\C T_1(\tfrac{1}{2}).
\label{eq:w-action-T1-singular}
\end{align}
\end{enumerate}
\end{subequations}
\end{Proposition}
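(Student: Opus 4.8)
The strategy is entirely computational: each formula is obtained by applying the relevant generator of $D(q)$ to a tableau and reading off the result in the tableaux basis. The key tool is the explicit action
\[
(X.\xi)\big(p(x)\big)=\xi\big(\varphi(X^\ast).p(x)\big),
\]
together with the formulas for $\varphi(u),\varphi(v),\varphi(w)$ computed in the proof of Theorem \ref{thm:main1}, and the identities $u^\ast=u$, $v^\ast=v$, $w^\ast=-w-v$ from Proposition \ref{prp:Dq-properties}(v). Since $\tfrac12 v+w$ has $\ast$-image $\tfrac12 v+(-w-v)=-\tfrac12 v-w$, and $w$ has $\ast$-image $-w-v$, while $-v-w$ has $\ast$-image $v-(-w-v)=v+w$; in practice it is cleanest to note $(-\tfrac12 v-w)^\ast=\tfrac12 v+w$ and $(-v-w)^\ast=v+w$, so that one works with $\varphi(-\tfrac12 v-w)$ and $\varphi(-v-w)$ as given in \eqref{eq:phi-v-w}--\eqref{eq:phi-half-v-w}. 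Concretely: to find $(\tfrac12 v+w).T_i(\la)$ one needs to evaluate $T_i(\la)$ on $\varphi(-\tfrac12 v-w).p(x)$ for arbitrary $p(x)\in\C[x^2]$, and similarly for $w$.

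First I would dispose of part (a). Since $\varphi(u)=x^2$ acts on $\C[x^2]$ by multiplication, $(u-\la^2).T_i(\la)$ evaluated on $p(x)$ gives $T_i(\la)\big((x^2-\la^2)p(x)\big)$. For $i=0$ this is $(\la^2-\la^2)p(\la)=0$; for $i=1$ it is the derivative at $\la$ of $(x^2-\la^2)p(x^2)$ — wait, of $(x^2-\la^2)p(x)$ where $p\in\C[x^2]$ — which by the product rule equals $2\la\cdot 0\cdot \text{(stuff)}+\cdots$; more carefully, $\frac{d}{dx}\big[(x^2-\la^2)p(x)\big]\big|_{x=\la}=2\la\, p(\la)$, so $(u-\la^2).T_1(\la)=2\la\, T_0(\la)$. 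This is off by a factor from the claimed $(u-\la^2).T_1(\la)=T_0(\la)$, which means $T_1$ must be normalized by $\tfrac{1}{2\la}\frac{d}{dx}$ rather than $\frac{d}{dx}$, or one works with the $x^2$-derivative; I would reconcile the normalization at the outset and then proceed consistently. The remaining parts (b)--(e) are then pure partial-fraction bookkeeping: write $\varphi(-\tfrac12 v-w)$ and $\varphi(-v-w)$ using \eqref{eq:phi-v-w}--\eqref{eq:phi-half-v-w} as sums of terms $c(x)\delta^{\pm1}$ and a rational multiplication term, apply each to an even polynomial $p(x)$ (so $\delta^{\pm1}.p(x)=p(x\mp1)$), evaluate or differentiate the resulting rational functions at $\la$, and match coefficients against $T_0(\la\pm1),T_1(\la\pm1),T_0(\la),T_1(\la)$. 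The $\C T_0(\la)$ and $\C T_1(\la)$ slack in (d)--(e) for $w$ (as opposed to the exact formulas for $\tfrac12 v+w$) arises precisely because $\varphi(w)$, unlike $\varphi(-\tfrac12 v-w)$, has a nonzero rational multiplication term $\frac{q(-1/2)(-1/2)}{(1/2+x)(1/2-x)}$ contributing a diagonal piece whose exact value I do not need to track.

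The singular cases (c) and (e), at $\la=0$ and $\la=\pm\tfrac12$ respectively, are where the main obstacle lies: the coefficients $\frac{q(\pm\la)}{2\la}$ and $\frac{q(\pm\la)}{1\pm2\la}$ appearing in the generic formulas have a pole there, yet the action on the module $M(\om)$ is well-defined. The resolution is that Theorem \ref{thm:main1}(iii) guarantees $\varphi(D(q))$ preserves $\C[x^2]$, equivalently the operators have no actual pole as operators on $\C[x^2]$; the apparent pole in the basis expansion is cancelled because, e.g. at $\la=0$, $T_0(1)=T_0(-1)$ and $T_1(-1)=-T_1(1)$, so the singular combination $\frac{q(0)}{2\cdot0}\big(T_0(1)-T_0(-1)\big)$ is $\tfrac00$ and must be evaluated by a limit/L'Hôpital argument, producing the finite answer $q(0)T_1(1)+q'(0)T_0(1)$ in \eqref{eq:v-action-T0-singular}. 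Thus for (c) I would take the $\la\to0$ limit of \eqref{eq:v-action-T0}, using $T_0(\la\pm1)=T_0(1)\pm\la T_1(1)+O(\la^2)$ (from $T_0(\mu)=T_0(-\mu)$ being even in $\mu$ and $\frac{d}{d\mu}T_0(\mu)=T_1(\mu)$ at the level of the parameter), expand $\frac{q(\pm\la)}{2\la}=\frac{q(0)}{2\la}\pm\frac{q'(0)}{2}+O(\la)$, and collect the finite part; the same limiting procedure at $\la=\tfrac12$, using $\tfrac12-\la\to0$, gives (e). Alternatively — and this is the cleaner route I would actually write up — one recomputes (c) and (e) directly from the operator formulas \eqref{eq:phi-v-w}--\eqref{eq:phi-half-v-w} applied to a general even $p(x)$ and evaluated at the singular point, where the divisibility statements proved in the proof of Theorem \ref{thm:main1}(iii) (namely that $q(x)p(x+1)-q(-x)p(x-1)$ is divisible by $2x$, and $q(\pm x)p(x\pm1)-q(-\tfrac12)p(x)$ is divisible by $1\pm2x$) are exactly what make the evaluation finite; carrying out that division and then evaluating at $\la=0$ or $\la=\tfrac12$ yields the stated formulas with no indeterminate forms.
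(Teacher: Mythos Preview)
Your approach is essentially the paper's: express the dual action via $\ast$, compute $\varphi(-\tfrac12 v-w).p(x)$ and $\varphi(-v-w).p(x)$ explicitly on even polynomials, then evaluate (or differentiate) at $\lambda$; for the singular points the paper does exactly your ``cleaner route,'' namely direct evaluation using the divisibility from Theorem~\ref{thm:main1}(iii) (with a Taylor expansion of $q(-x)$ at $x=\tfrac12$ for \eqref{eq:w-action-T1-singular}).

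Two minor remarks. First, you miscompute $(-v-w)^\ast$: since $v^\ast=v$ and $w^\ast=-w-v$, one has $(-v-w)^\ast=-v-(-w-v)=w$, not $v+w$; fortunately what you actually use is $w^\ast=-v-w$, which is correct, so nothing downstream is affected. Second, the normalization issue you flag in part~(a) is genuine and is not addressed in the paper's proof either: with $T_1(\lambda)(p)=p'(\lambda)$ interpreted as the ordinary $x$-derivative one indeed obtains $(u-\lambda^2).T_1(\lambda)=2\lambda\,T_0(\lambda)$, so your instinct to fix the normalization (e.g.\ interpret $p'$ as the derivative in the variable $x^2$) before proceeding is the right call.
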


\begin{proof}
Let $\la\in\C$ and $p(x)\in\C[x^2]$.
Since $w^\ast=-v-w$ and $(\tfrac{1}{2}v+w)^\ast=-\tfrac{1}{2}v-w$, we have
\begin{align}
\big(w.T_i(\la)\big)(p(x)) &= T_i(\la)\big(\varphi(-v-w).p(x)\big), \\
\big((\tfrac{1}{2}v+w).T_i(\la)\big)(p(x)) &= T_i(\la)\big(\varphi(-\tfrac{1}{2}v-w).p(x)\big).
\end{align}
Furthermore,
\begin{align}
\varphi(-\tfrac{1}{2}v-w).p(x)&=\frac{q(x)p(x+1)-q(-x)p(-x+1)}{2x},\\
\varphi(-v-w).p(x) &= \frac{q(x)p(x+1)-q(-\tfrac{1}{2})p(x)}{1+2x}+\frac{q(-x)p(-x+1)-q(-\tfrac{1}{2})p(-x)}{1-2x}.
\end{align}
Thus, if $\la\in\C\setminus\{0\}$ then
\begin{equation}
(\tfrac{1}{2}v+w).T_0(\la)=\frac{q(\la)}{2\la}T_0(\la+1)-\frac{q(-\la)}{2\la}T_0(\la-1).
\end{equation}
Similarly, if $\la\in\C\setminus\{\tfrac{1}{2},-\tfrac{1}{2}\}$, then
\begin{equation}
w.T_0(\la) = \frac{q(\la)}{2(\tfrac{1}{2}+\la)}T_0(\la+1)+\frac{q(-\la)}{2(\tfrac{1}{2}-\la)}T_0(\la-1)+\frac{q(-\tfrac{1}{2})}{(\tfrac{1}{2}+\la)(\tfrac{1}{2}-\la)}T_0(\la).
\end{equation}

It remains to prove the singular cases.
\begin{align*}
w.T_1(\la)&=\frac{q(\la)}{2(\tfrac{1}{2}+\la)}T_1(\la+1)+\frac{q(-\la)}{2(\tfrac{1}{2}-\la)}T_1(\la-1)+\frac{q(-\tfrac{1}{2})}{(\tfrac{1}{2}+\la)(\tfrac{1}{2}-\la)}T_1(\la) \\
&\quad+
\frac{q'(\la)(\tfrac{1}{2}+\la)-q(\la)}{2(\tfrac{1}{2}+\la)^2}T_0(\la+1)+\frac{q(-\la)}{2(\tfrac{1}{2}-\la)}T_0(\la-1)+\frac{q(-\tfrac{1}{2})}{(\tfrac{1}{2}+\la)(\tfrac{1}{2}-\la)}T_0(\la)
\end{align*}

If $\la\in \tfrac{1}{2}+\Z_{\ge 0}$ then
\begin{align*}
& T_i(\la)\Big(\frac{q(x)p(x+1)-q(-\tfrac{1}{2})p(x)}{1+2x}+\frac{q(-x)p(-x+1)-q(-\tfrac{1}{2})p(-x)}{1-2x}\Big) \\
&\quad=
\Big(\frac{q(\la)}{1+2\la}T_i(\la+1)+(-1)^i\frac{q(-\la)}{1-2\la}T_i(\la-1)-\big(\frac{q(-\tfrac{1}{2})}{1+2\la}+(-1)^i\frac{q(-\tfrac{1}{2})}{1-2\la}\big)T_i(\la)\Big)\\
&\quad+\delta_{i1}\frac{1}{2}\Big(\frac{q'(\la)(\tfrac{1}{2}+\la)-q(\la)}{(\tfrac{1}{2}+\la)^2}T_0(\la+1)+ \frac{-q'(-\la)(\tfrac{1}{2}-\la)+q(-\la)}{(\tfrac{1}{2}-\la)^2}T_0(\la-1)\Big)+ \C T_0(\la)
\end{align*}

\begin{align*}
& T_0(\tfrac{1}{2})\Big(\frac{q(x)p(x+1)-q(-\tfrac{1}{2})p(x)}{1+2x}+\frac{q(-x)p(-x+1)-q(-\tfrac{1}{2})p(-x)}{1-2x}\Big) \\
&\quad=\Big(\frac{1}{2}q(\tfrac{1}{2})T_0(\tfrac{3}{2})+\frac{q'(-\tfrac{1}{2})-q(-\tfrac{1}{2})}{2}T_0(\tfrac{1}{2})+q(-\tfrac{1}{2})T_1(\tfrac{1}{2})\Big)(p(x))
\end{align*}
Lastly, to prove \eqref{eq:w-action-T1-singular}, in the second term we Taylor expand $q(-x)$ at $x=\tfrac{1}{2}$ to get
\begin{align*}
& T_1(\tfrac{1}{2})\Big(\frac{q(x)p(x+1)-q(-\tfrac{1}{2})p(x)}{1+2x}+\frac{q(-x)p(-x+1)-q(-\tfrac{1}{2})p(-x)}{1-2x}\Big) \\
&\quad=\Big(\frac{1}{2}q(\tfrac{1}{2})T_1(\tfrac{3}{2})
+\frac{q'(\tfrac{1}{2})-q(\tfrac{1}{2})}{2}T_0(\tfrac{3}{2})+\frac{1}{2}q(-\tfrac{1}{2})T_0(\tfrac{1}{2})-q(-\tfrac{1}{2})T_1(\tfrac{1}{2})\\
&\quad-\frac{1}{4}q''(-\tfrac{1}{2})T_0(\tfrac{1}{2})-q
(-\tfrac{1}{2})T_1(\tfrac{1}{2})\Big)(p(x)).
\end{align*}
\end{proof}

We briefly describe the content of Figures \ref{fig:generic}-\ref{fig:half-integral} implied by Proposition \ref{prop:calculations}.
Let $q(t)$ be a polynomial, $\deg q(t)\ge 4$. Let $\om\in\C/\Z$. For $\om\neq\Z,\frac{1}{2}+\Z$ (the \emph{generic} case), the structure of $D(q)$-module $M_0(\om)$ is depicted in Figure \ref{fig:generic}. Each vertex represents a weight vector, i.e. an eigenvector of $u\in D(q)$. Zeros of $q(t)$ from $\om$ or $-\om$ determine submodules. For example, if $q(\la)=0$, which can be thought of as signaling that we have \emph{no} edge from $T_0(\la)$ to $T_0(\la+1)$, then $\oplus_{r=0}^\infty \C T_0(\la-r)$ is a submodule. Similarly for the \emph{singular} cases of $\om=\Z$ and $\om=\frac{1}{2}+\Z$, depicted in Figures \ref{fig:integral} and \ref{fig:half-integral} respectively. Here the vertices on top are \emph{generalized} weight vectors with respect to $u$ (``derivative tableaux''), while the remaining ones are weight vectors. The spectrum of $u$ on these modules consists of squares of integers or squares of half-integers, respectively. For these singular cases, the derivative of $q(t)$ also plays a role. Ultimately, since there can only be finitely many zeroes of $q(t)$ and $q'(t)$ in any given orbit (or at all), the submodule structure in the singular cases happens in the leftmost part of the diagrams. Eventually, far enough to the right, all edges are present and we have an infinite-dimensional irreducible submodule or quotient.

\begin{figure}
     \centering
      \begin{tikzpicture}[scale=2]
\path (0,0)   node[left] {$\cdots$}
      (0,0)   node (a) {$\bullet$}
      (a)  node[below=3mm] {$T_0(\la-1)$}
      (2,0)   node (b) {$\bullet$}
      (b)  node[below=3mm] {$T_0(\la)$}
      (4,0)   node (c) {$\bullet$}
      (c)  node[below=3mm] {$T_0(\la+1)$}
      (6,0)   node (d) {$\bullet$}
      (d)  node[below=3mm] {$T_0(\la+2)$}
      (6,0)  node[right] {$\cdots$};

\path (a) edge[->,bend left=15] node[above] {$q(\la-1)$} (b)
      (b) edge[->,bend left=15] node[below] {$q(-\la)$} (a)
      
      (b) edge[->,bend left=15] node[above] {$q(\la)$} (c)
      (c) edge[->,bend left=15] node[below] {$q(-\la-1)$} (b)
      
      (c) edge[->,bend left=15] node[above] {$q(\la+1)$} (d)
      (d) edge[->,bend left=15] node[below] {$q(-\la-2)$} (c);
\end{tikzpicture}
\caption{Structure of $M_0(\omega)$ when $\omega\neq \Z,\tfrac{1}{2}+\Z$. The label of an edge is nonzero if and only if the target is in the $(\C[u]1+\C[u]v+\C[u]w)$-span of the source.}\label{fig:generic}
\end{figure}
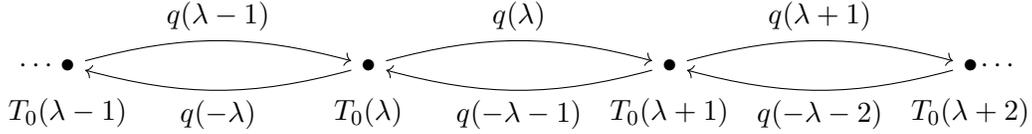
\begin{figure}
\centering
         \begin{tikzpicture}[scale=2]
\path (0,0)   node (a) {$\bullet$}
      (2,0)   node (b) {$\bullet$}
      (4,0)   node (c) {$\bullet$}
      (4,0)   node[right=5pt] {$\cdots$}
      (4,-2)  node[right=5pt] {$\cdots$}
      
      (-{sqrt(3)},-1) node (z) {$\bullet$}
      (0,-2)  node (aa) {$\bullet$}
      (2,-2)  node (bb) {$\bullet$}
      (4,-2)  node (cc) {$\bullet$}

      (z)     node[left] {$T_0(0)$}
      (aa)    node[below=3pt] {$T_0(1)$}
      (bb)    node[below=3pt] {$T_0(2)$}
      (cc)    node[below=3pt] {$T_0(3)$}
      (a)     node[above=3pt] {$T_1(1)$}
      (b)     node[above=3pt] {$T_1(2)$}
      (c)     node[above=3pt] {$T_1(3)$};
      
\path (a) edge[->,bend left=15] node[above] {$q(1)$} (b)
      (b) edge[->,bend left=15] node[below] {$q(-2)$}  (a)
      
      (b) edge[->,bend left=15] node[above] {$q(2)$}  (c)
      (c) edge[->,bend left=15] node[below] {$q(-3)$} (b)
      
      (aa) edge[->,bend left=15] node[above] {$q(1)$}  (bb)
      (bb) edge[->,bend left=15] node[below] {$q(-2)$} (aa)
      
      (bb) edge[->,bend left=15] node[above] {$q(2)$}  (cc)
      (cc) edge[->,bend left=15] node[below] {$q(-3)$} (bb)

      (z) edge[->,bend left=15] node[above left=-3pt] {$q(0)$} (a)
      (a) edge[->,bend left=15] node[below right=-3pt] {$q'(-1)$} (z)

      (z) edge[->,bend left=15] node[above right=-3pt] {$q'(0)$} (aa)
      (aa) edge[->,bend left=15] node[below left=-3pt] {$q(-1)$} (z)
      
      (a) edge[->] (aa)
      (b) edge[->] (bb)
      (c) edge[->] (cc)
      
      (a) edge[->] node[pos=0.6, right] {$q'(1)$} (bb)
      (b) edge[->] node[pos=0.6, left] {$q'(-2)$} (aa)
      
      (b) edge[->] node[pos=0.6, right] {$q'(2)$} (cc)
      (c) edge[->] node[pos=0.6, left] {$q'(-3)$} (bb);
\end{tikzpicture}
\caption{Structure of the $D(q)$-module $M(\Z)$. The label of an edge is nonzero if and only if the target is in the $(\C[u]1+\C[u]v+\C[u]w)$-span of the source, and $q'(x)$ means the derivative of $q(x)$.}\label{fig:integral}
\end{figure}
\begin{figure}
\centering
\begin{tikzpicture}[scale=2]
\path (0,0)   node (a) {$\bullet$}
      (2,0)   node (b) {$\bullet$}
      (4,0)   node (c) {$\bullet$}
      (4,0)   node[right=5pt] {$\cdots$}
      (4,-2)  node[right=5pt] {$\cdots$}
      
      (0,-2)  node (aa) {$\bullet$}
      (2,-2)  node (bb) {$\bullet$}
      (4,-2)  node (cc) {$\bullet$}

      (aa)    node[below=3pt] { $T_0(\tfrac{1}{2})$}
      (bb)    node[below=3pt] { $T_0(\tfrac{3}{2})$}
      (cc)    node[below=3pt] { $T_0(\tfrac{5}{2})$}
      (a)     node[above=3pt] { $T_1(\tfrac{1}{2})$}
      (b)     node[above=3pt] { $T_1(\tfrac{3}{2})$}
      (c)     node[above=3pt] { $T_1(\tfrac{5}{2})$};
      
\path (a) edge[->,bend left=15] node[above] {$q(\tfrac{1}{2})$} (b)
      (b) edge[->,bend left=15] node[below] {$q(-\tfrac{3}{2})$}  (a)
      
      (b) edge[->,bend left=15] node[above] {$q(\tfrac{3}{2})$}  (c)
      (c) edge[->,bend left=15] node[below] {$q(-\tfrac{5}{2})$} (b)
      
      (aa) edge[->,bend left=15] node[above] {$q(\tfrac{1}{2})$}  (bb)
      (bb) edge[->,bend left=15] node[below] {$q(-\tfrac{3}{2})$} (aa)
      
      (bb) edge[->,bend left=15] node[above] {$q(\tfrac{3}{2})$}  (cc)
      (cc) edge[->,bend left=15] node[below] {$q(-\tfrac{5}{2})$} (bb)

      (aa) edge[->,bend left=90] node[below left=-3pt] {$q(-\tfrac{1}{2})$} (a)
      
      (a) edge[->] (aa)
      (b) edge[->] (bb)
      (c) edge[->] (cc)
      
      (a) edge[->] node[pos=0.6, right] {$q'(\tfrac{1}{2})$} (bb)
      (b) edge[->] node[pos=0.6, left] {$q'(-\tfrac{3}{2})$} (aa)
      
      (b) edge[->] node[pos=0.6, right] {$q'(\tfrac{3}{2})$} (cc)
      (c) edge[->] node[pos=0.6, left] {$q'(-\tfrac{5}{2})$} (bb);
\end{tikzpicture}
\caption{Structure of the $D(q)$-module $M(\tfrac{1}{2}+\Z)$. The label of an edge is nonzero if and only if the target is in the $(\C[u]1+\C[u]v+\C[u]w)$-span of the source, and $q'(x)$ means the derivative of $q(x)$.}\label{fig:half-integral}
\end{figure}

\bibliographystyle{siam}

\end{document}